\documentclass[times,sort&compress,3p]{elsarticle}
\journal{Journal of Multivariate Analysis}
\usepackage[labelfont=bf]{caption}

\usepackage{amsmath,amsfonts,amssymb,amsthm,bbm,booktabs,color,epsfig,graphicx,hyperref}

\theoremstyle{plain}
\newtheorem{theorem}{Theorem}

\newtheorem{proposition}{Proposition}
\newtheorem{lemma}{Lemma}
\newtheorem{corollary}{Corollary}
\newtheorem{definition}{Definition}

\input{preamble.tex}

\begin{document}

\begin{frontmatter}

\title{Consistency of empirical distributions of sequences of graph statistics in networks
with dependent edges} 

\author[1]{Jonathan R. Stewart\corref{mycorrespondingauthor}}

\address[1]{Department of Statistics \\ Florida State University \\ 
117 N Woodward Ave\\
Tallahassee, FL 32306-4330}

\cortext[mycorrespondingauthor]{Corresponding author. Email address: \url{jrstewart@fsu.edu}.}

\begin{abstract}
One of the first steps in applications of statistical network analysis is frequently
to produce summary charts of important features of the network.
Many of these features take the form of sequences of graph statistics counting the number of realized events in the network,
examples of which include the degree distribution,
as well as the edgewise shared partner distribution,
and more.
We provide conditions under which the empirical distributions of sequences of graph statistics
are consistent in the $\ell_{\infty}$-norm in settings where edges in the network are dependent.
We accomplish this task by deriving concentration inequalities that bound probabilities of deviations of graph statistics
from the expected value under weak dependence conditions.
We apply our concentration inequalities to empirical distributions of sequences of graph statistics
and derive non-asymptotic bounds on the $\ell_{\infty}$-error which hold with high probability.
Our non-asymptotic results are then extended to demonstrate uniform convergence almost surely
in selected examples.
We illustrate theoretical results through examples,
simulation studies,
and an application.
\end{abstract}

\begin{keyword} 
Empirical distributions of graph statistics \sep 
network data \sep 
statistical network analysis
\MSC[2020] Primary 62H12 \sep
Secondary 62G30
\end{keyword}

\end{frontmatter}

\section{Introduction\label{sec:1}}

We consider simple random graphs $\bX$ which are defined on a set of $N \geq 3$ nodes,
which we take without loss of generality to be the set $\mN \coloneqq \{1, \ldots, N\}$ throughout.  
The edge variables in $\bX$ are then given by 
\beno
X_{i,j} 
\= \begin{cases}
1 & \mbox{nodes $i$ and $j$ are connected in the graph} \\ 
0 & \mbox{otherwise}
\end{cases},
&& (i,j) \in \mN \times \mN. 
\ee
We assume that $X_{i,i} = 0$ for all $i \in \mN$ with probability $1$,
and in the case of undirected graphs, 
we assume that $X_{i,j} = X_{j,i}$  for all $\{i,j\} \subset \mN$ with probability $1$.  
We denote the support of $\bX$ by $\mbX$ 
and throughout consider probability spaces 
$(\mbX, \mP(\mbX), \mbP)$,
where $\mP(\mbX)$ is the power set of $\mbX$
and denote the corresponding expectation operator by $\mbE$. 

In this work, we will be interested in the empirical distributions of sequences of graph statistics 
defined around sequences of events. 
We consider sequences of mutually exclusive events 
$\mG_{0,m}, \mG_{1,m}, \ldots, \mG_{p,m}$ ($m \in \{1, \ldots, M\}$)
defined around the random graph $\bX$ 
and a corresponding sequence of graph statistics $\bs : \mbX \mapsto \mbR^{p+1}$
which are defined to be 
\be
\label{eq:stat}
s_{k}(\bX)
&\coloneqq& \dsum_{m=1}^{M} \, \mathbbm{1}(\mG_{k,m}),
&& k \in \{0,1, \ldots, p\}. 
\ee
The corresponding empirical distribution $\empnox : \mbX \mapsto [0,1]^{p+1}$ is then defined to be 
\be
\label{eq:emp}
\empk
&\coloneqq& \dfrac{1}{M} \, s_k(\bX),
&& k \in \{0, 1, \ldots, p\}. 
\ee
A 
prime 
example is the degree distribution.
Let $\mG_{d,i}$ be the event that node $i \in \mN$ has degree $d \in \{0, 1, \ldots, N-1\}$.
Then 
\be
\label{eq:deg_ex}
s_{d}(\bX)
\= \dsum_{i=1}^{N} \, \one(\mG_{d,i})
\=  \dsum_{i=1}^{N} \, \one\left(\, \dsum_{j \in \mN \setminus \{i\}} \, X_{i,j} \,=\,d \right),
&& d \in \{0, 1, \ldots, N-1\}, 
\ee 
in which case 
\beno
\empd 
\= \dfrac{s_d(\bX)}{N} &\in& [0, 1],
&& d \in \{0, 1, \ldots, N-1\}.
\ee
In words,
$s_{d}(\bX)$ counts the number of nodes with exactly $d \in \{0, 1, \ldots, N-1\}$ connections
to other nodes in the network $\bX$
and $\empd$ represents the proportion of nodes with degree precisely equal to $d$ in the network $\bX$.
We visualize an example of a network and corresponding empirical degree distribution in Fig. \ref{fig:ex}.
In this example,
observe that $\dim(\emp) = N$.
This work considers scenarios in which the dimension of the vectors $\emp$
encoding empirical distributions of sequences of graph statistics are allowed to grow unbounded with the size of the graph $N$.

\begin{figure}
\centering 
\includegraphics[scale = .4]{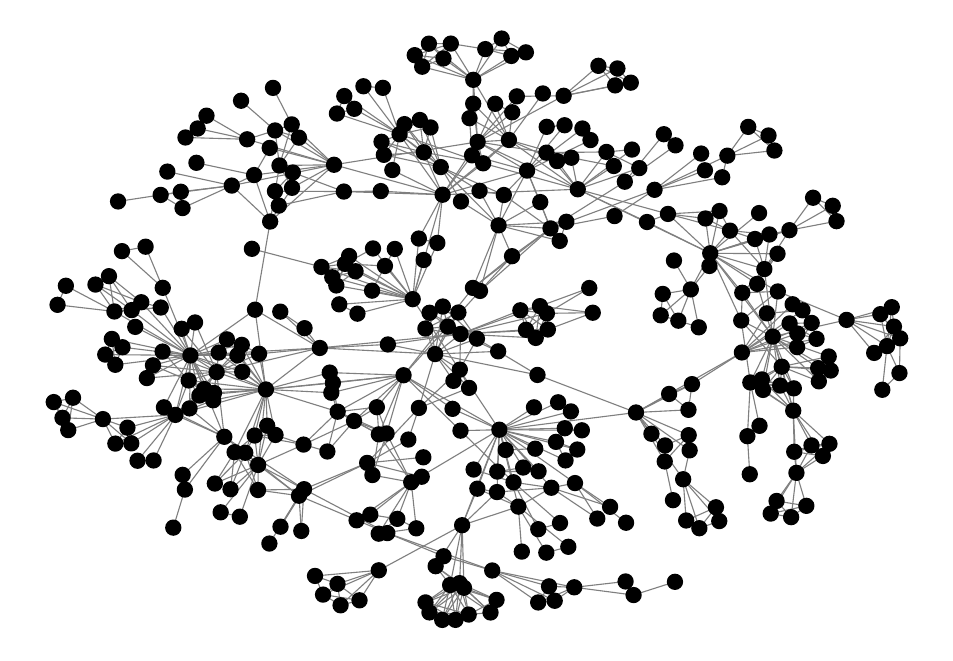} \hspace{.2cm}  
\includegraphics[scale = .4]{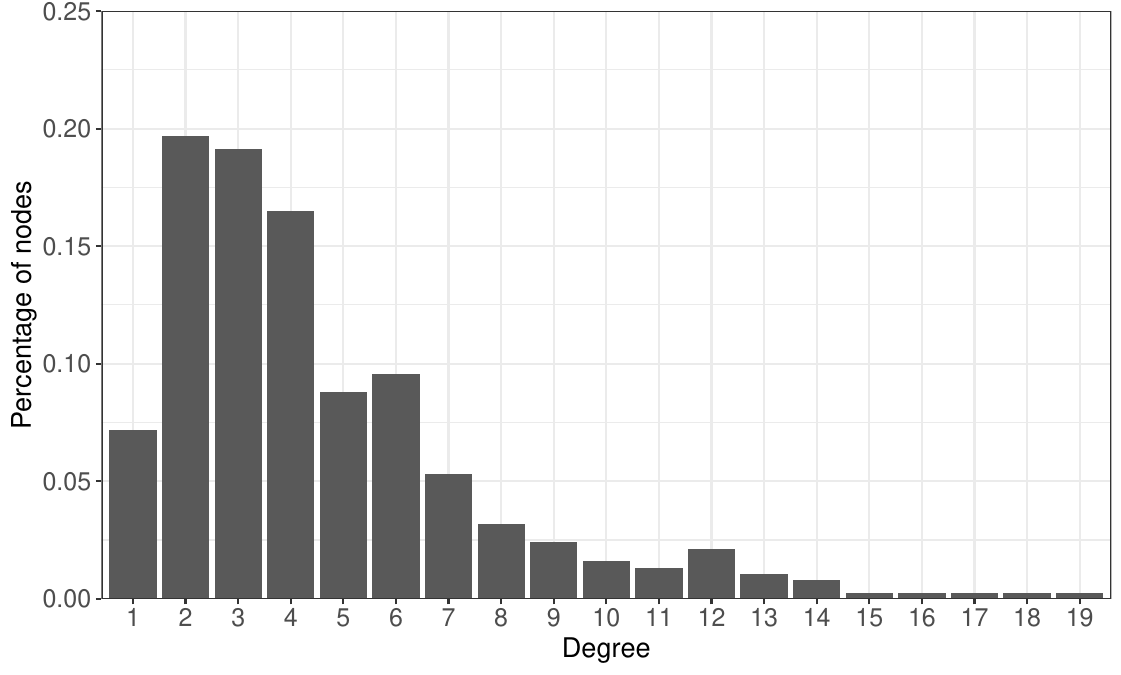}
\caption{\label{fig:ex} (left) A visualization of a collaboration network which consists of a set of researches as nodes,
with edges corresponding to co-authorship. (right) The empirical degree distribution of the collaboration network. 
This  network data set is maintained by \citet{nr}.}
\end{figure}

It is natural to ask under what conditions can we expect $\emp$ to provide an 
accurate estimate of the true underlying distribution of the sequence of graph statistics. 
We define this distribution to be $\truth : \mbX \mapsto [0, 1]^{p+1}$,
where 
\beno 
\truthk
&\coloneqq& \mbE \, \empk
\= \dfrac{1}{M} \, \dsum_{m=1}^{M} \, \mbP(\mG_{k,m}), 
&& k \in \{0, 1, \ldots, p\},
\ee
represents the theoretical marginal probabilities.  
If the indicator random variables $\one(\mG_{k,m})$ ($m \in \{1, \ldots, M\}$) 
are exchangeable, 
then $\truthk = \mbP(\mG_{k,m})$ for all $m \in \{1, \ldots ,M\}$,
which is analogous to the setting of an empirical distribution based on a random sample.
A notable difference in this work is that we will be considering 
settings where we obtain only a single observation of a network. 
As such, 
we do not have the benefit of replication 
and empirical distributions of graph statistics will be based on only a single observation of the network.
The interpretation of what would be the 
true distribution $\truth$ is then slightly different in this context. 
When considering the degree distribution, 
we can understand the marginal probability $\truthk \in [0, 1]$ to represent the probability 
that a randomly selected node $i \in \mN$ in the network will have degree equal to $k$. 
In a broader context, 
the results in this work establish conditions under which the empirical distributions $\emp$ 
of sequences of graph statistics will be stable, 
in the sense that deviations $\norm{\emp - \truth}_{\infty}$ 
will be small with high probability provided the size of the network $N$ is sufficiently large.

One of the main challenges of this problem lies in the fact that the 
random variables $\empo, \ldots, \empp$ will generally be dependent, 
even when the edge variables in the random graph are independent,
and the derivation of concentration inequalities for dependent random variables is highly non-trivial.  
A case in point is the degree distribution, 
as even if the edge variables are independent,
the degrees of nodes $i \in \mN$ and $j \in \mN \setminus \{i\}$ 
will still be dependent as both depend on the value of edge variable $X_{i,j}$. 
While cases of independent edge variables might be handled,
for example, 
with the bounded difference inequality,
the assumption that edge variables in a network are independent can be too strong for many applications, 
especially for applications in social network analysis 
where it has long been observed that edges are dependent \citep[e.g.,][]{Holland1972,Frank1980}. 
To overcome this challenge, 
we develop concentration inequalities for random graphs with dependent edges 
under different dependence conditions, 
which enables us to cover a wide range of applications. 
We demonstrate the applicability of our theory through mathematical examples
which are presented as corollaries and through simulation studies and a network data application. 

\s

The main contributions of this work include:
\ben
\item Deriving non-asymptotic bounds on the error $\norm{\emp - \truth}_{\infty}$
which hold with high probability; \s
\item Establishing a form of uniform convergence by demonstrating that $\norm{\emp - \truth}_{\infty}$
converges almost surely to $0$ as $N \to \infty$
in various theoretical applications which demonstrate the applicability of the main results; \s
\item Conducting simulation studies that showcase the empirical performance of the theoretical results; \ls
\item Demonstrating the theoretical results through an application to a school classes network data set,
which facilitates an exploration of rates of convergence through a specific sampling mechanism.
\een

The rest of the paper is organized as follows.
Section \ref{sec:related} reviews related work.
Theoretical results are presented in Section \ref{sec:2},
with simulation studies and empirical results being presented in Section \ref{sec:3}.
We present an application of our theory to a network data set in Section \ref{sec:application},
and conclude with a short discussion of the contributions in Section \ref{sec:5}.

\subsection{Related work\label{sec:related}}

\hide{
This work establishes the first results which prove the consistency of empirical distributions of sequences of graph statistics
for a general class of random graphs which allow edges to be dependent. 
Notably, 
this work covers a wide range of sequences of graph statistics,
whereas much of the existing literature focuses on specific sequences, 
predominantly the degree distribution of a network. 
We review related work which is closely related to the problem studied here following two main approaches. 
}

We review related work which is closely related to the problem studied here following two main approaches.

There are a number of works 
which establish limiting distributions of sequences of graph statistics,
facilitating inference on the distributions of sequences of graph statistics through an asymptotic approximation. 
Along this vein,
some examples include work by  
\citet{Krivitsky2011}, 
who established the limiting degree distribution of a class of sparse Bernoulli random graph models,
and \citet{Britton2020}, 
who established (among other theoretical results) the limiting degree distribution of 
directed preferential attachment models,
building on other work within this class of models \citep[e.g.,][]{Bollobas2003}.  
We take a different approach in this work and 
focus on establishing the consistency of empirical distributions of sequences of graph statistics as a means of 
facilitating inference on the distributions of sequences of graph statistics,
in contrast to using asymptotic approximations of distributions.  

Along a different inferential goal, 
there are a number of works which aim to estimate unknown degree distributions 
(or other quantities) 
of large networks through sampling. 
Examples include works by 
\citet{Antunes2021}, 
\citet{Zhang2015}, and 
\citet{Ribeiro2012}.
The inferential goal of these works is distinct from the goal of this work 
and that of the works cited in the previous paragraph. 
This distinction may be characterized as the differences between finite population versus super population inference 
in statistical network analysis applications
\citep{Schweinberger2020},
which can be understood in the following way. 
The work of 
\citet{Antunes2021} aims to infer an unknown,
but fixed,
degree distribution of a large network via sampling
within a finite population inference framework under which the entire network is the population of interest.  
In contrast,
the work of \citet{Britton2020} characterizes the limiting degree distribution of a certain class of networks,
where within a super population inference framework,
the population of interest is the population of degree distributions
which describes the variability of node degrees under different realizations of the network from a data-generating probability distribution.
In this work,
we will operate under a super population inferential framework,
aiming to characterize the statistical variability
of empirical distributions of sequences of graph statistics that would arise if we were able to replicate
the network from some data-generating probability distribution.

Other related works include that of \citet{Bickel2011} and \citet{Chan2014}, 
both of which considered the problem of fitting a class of
statistical models which assume edge variables are conditionally independent 
using empirical quantities related to graph statistics. 
The work of \citet{Bickel2011} 
introduced a method of fitting a class of
statistical models which assume edge variables are conditionally independent using a method of moments estimator 
based on empirical frequencies of graph statistics,
and as part of this work, 
established the asymptotic consistency of empirical quantities for degree distributions within this class of models. 
The work of \citet{Chan2014} proposed a consistent histogram estimator for graphons
which is based on a sorting algorithm of the empirical degree distribution. 
Both works are concerned with empirical quantities related to graph statistics,
namely the degree distribution, 
but in the context of an overall goal of developing methods for fitting statistical models to observed networks.

Related works on these topics have predominantly focused on studying the degree distributions of networks.
In this work,
we develop new theoretical results which cover a broad range of sequences of graph statistics,
including degree distributions and edgewise shared partner distributions as examples.
A key difference between this work and the cited related work is that the theory developed in this work
covers a broader class of random graphs by allowing edges within networks to be dependent.
The above-cited works make strong assumptions on the dependence structure of the edge variables in the network,
either assuming that edges are independent or conditionally independent.
In addition,
the cited results focus on asymptotic theory,
whereas our main results are non-asymptotic and establish uniform rates of convergence,
covering both a broad scope of distributions of sequences of graph statistics in settings where 
the edge variables are dependent.

\section{Theoretical results\label{sec:2}}

The main theoretical results are presented in Section \ref{sec:main_res}.
We study two applications of our theory, 
which are the 
degree distribution and the edgewise shared partner distribution, 
in Sections \ref{sec:deg_app} and \ref{sec:esp_app},
respectively. 
Before presenting these results,  
we first outline the key assumptions of this work,
which are weak dependence assumptions, 
and derive the  
concentration inequalities used  for the proofs of the main results 
in Section \ref{sec:concentration}. 
We discuss our weak dependence assumptions in further detail in 
Section \ref{sec:weak_dep},
emphasizing the applicability to real world networks.

\subsection{Concentration inequalities for random graphs with dependent edges\label{sec:concentration}} 

\hide{
We aim to study probabilities of the event 
$\norm{\emp - \truth}_{\infty} \geq \epsilon$ for $\epsilon > 0$ 
in order to establish rates of convergence for the empirical distributions of sequences of graph statistics. 
As discussed in Section \ref{sec:1},
a key challenge in network data applications lies in the fact that the networks of our world often possess dependent edges.}

We present two approaches to deriving concentration inequalities for quantities  $\norm{\emp - \truth}_{\infty}$
for random graphs with dependent edges.
One approach is  based on martingale decompositions,
whereas the other is based on covariances. 
Related approaches to developing concentration inequalities for functions of dependent random variables 
with countable supports 
based on works by \citet{Chazottes2007} and \citet{Kontorovich2008}
have been  
successfully applied in the statistical network analysis literature 
in settings of random graphs with dependent edges  
\citep{SchweinbergerStewart2020,Stewart2020}. 
For this work,
however,  
such approaches will not yield suitable bounds. 
We derive concentration inequalities 
for the explicit purpose of establishing
bounds on the tail probabilities of events $\norm{\emp - \truth}_{\infty} \geq \epsilon$ for $\epsilon > 0$
in Lemma \ref{lem:concentration}.

We define the collection of  (dependent) 
Bernoulli random variables for events $\mG_{k,i}$ 
by 
\be
\label{eq:berns}
B_{k,i} 
\;\coloneqq\;  \one(\mG_{k,i}) 
\;\in\; \{0, 1\},
&& 
(k,i) \in \{0, 1, \ldots, p\} \times \{1, \ldots, M\}, 
\ee
and 
let $d_{\tv}$ denote the total variation distance 
between two probability measures defined on a common measurable space. 
Central to the theoretical results developed in this work are 
three key conditions  
which are weak dependence conditions that quantify,
through different methods,  
the influence of any $B_{k,i}$ on any other $B_{k,j}$ ($j \in \{1, \ldots, M\} \setminus \{i\}$). 

\begin{definition}
\label{def1}
Let $\mbC$ denote the covariance operator corresponding to the probability distribution $\mbP$ 
and define
\hide{ 
\beno
\mcC_{i,k}
&\coloneqq& \dsum_{j \in \{1, \ldots, M\} \setminus \{i\}} \, \cov(B_{k,i}, \, B_{k,j}),
&& i \in \{1, \ldots, M\}, \; k \in \{0, 1, \ldots, p\}, 
\ee
and 
}
\beno
\mcC_N 
&\coloneqq& \dfrac{1}{M} \; 
\dsum_{i=1}^{M} \, 
\dsum_{j \in \{1, \ldots, M\} \setminus \{i\}} \, 
\dsum_{k=0}^{p} \, 
\cov(B_{k,i}, \, B_{k,j}). 
\ee
\end{definition}

Definition \ref{def1} 
quantifies the strength of dependence among the events of interest through a measure of average covariance,
allowing certain $B_{k,i}$ and $B_{k,j}$ to be highly correlated,  
provided the average covariance as measured by $\mcC_N$ is not too large. 
This allows for variable influence among the random variables $B_{k,i}$ ($i \in \{1, \ldots, M\}$, $k \in \{0, 1, \ldots, p\}$)
and some potentially strong dependence among the variables,
reiterating that meaningful results will require that dependence is sufficiently weak overall
by limiting the scaling of $\mcC_N$,
which will be seen in the developed theory. 

\s

\begin{definition}
\label{def2}
Assume that $\mbP(B_{k,i} = 1) > 0$ 
for all $(k,i) \in \{0, 1, \ldots, p\} \times \{1, \ldots, M\}$
and define 
\beno
\Delta_{N} &\coloneqq& 
\dfrac{1}{M} \,
\dsum_{i=1}^{M} \,
\dsum_{k=0}^{p} \,
\mbP(B_{k,i} = 1) \,
\dsum_{j \in \{1, \ldots, M\} \setminus \{i\}} \,
\left(
\mbP(B_{k,j} = 1 \,|\, B_{k,i} = 1) - \mbP(B_{k,j} = 1)
\right). 
\ee
\end{definition}

\s

\begin{proposition}
\label{prop:1}
Let $\bB_i \coloneqq (B_{0,i}, B_{1,i}, \ldots, B_{p,i})$ ($i \in \{1, \ldots, M\}$)
be as defined in \eqref{eq:berns}
and define
\beno
\pi_{j\,|\,i}^{\bb_i}(\bb_j)
&\coloneqq& \mbP(\bB_j = \bb_j \,|\, \bB_i = \bb_i),
&& j \in \{1, \ldots, M\} \setminus \{i\}, \; i \in \{1, \ldots, M\}, \s \\
\pi_{j}(\bb_j)
&\coloneqq& \mbP(\bB_j = \bb_j),
&& j \in \{1, \ldots, M\}. 
\ee
Then 
\beno
\Delta_N
&\leq& \dfrac{1}{M} \,
\dsum_{i=1}^{M} \,
\dsum_{j \in \{1, \ldots, M\} \setminus \{i\}} \, \mbE \, d_{\tv}\left(\pi_{j\,|\,i}^{\bB_i}, \, \pi_{j} \right).
\ee
where $\Delta_N$ is as  defined in Definition \ref{def2}. 
\end{proposition}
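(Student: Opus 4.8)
The plan is to exploit the mutual exclusivity of the events $\mG_{0,m}, \ldots, \mG_{p,m}$ for each fixed $m$, which forces the vector $\bB_i$ to take values only among the zero vector $\bm{0}$ and the standard basis vectors $\bme_0, \ldots, \bme_p$ of $\{0,1\}^{p+1}$. The crucial consequence is that the scalar conditioning event $\{B_{k,i} = 1\}$ coincides exactly with the vector-valued atom $\{\bB_i = \bme_k\}$ (since $B_{k,i} = 1$ forces $B_{l,i} = 0$ for all $l \neq k$), and likewise $\{B_{k,j} = 1\} = \{\bB_j = \bme_k\}$. This identification is what will let me pass from the scalar quantities appearing in $\Delta_N$ to the total variation distance between the joint laws $\pi_{j\,|\,i}^{\bb_i}$ and $\pi_j$.

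First I would reorder the triple sum defining $\Delta_N$ so that it is organized by pairs $(i,j)$ with $j \neq i$, isolating the inner expression $\sum_{k=0}^p \mbP(B_{k,i}=1)(\mbP(B_{k,j}=1 \mid B_{k,i}=1) - \mbP(B_{k,j}=1))$; note in passing that each summand equals $\mbP(B_{k,i}=1,B_{k,j}=1) - \mbP(B_{k,i}=1)\mbP(B_{k,j}=1) = \cov(B_{k,i},B_{k,j})$, so in fact $\Delta_N = \mcC_N$ exactly. Using the atom identification above, each summand also equals $\mbP(\bB_i = \bme_k)(\pi_{j\,|\,i}^{\bme_k}(\bme_k) - \pi_j(\bme_k))$. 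Next I would invoke the characterization $d_{\tv}(\mu,\nu) = \sup_A |\mu(A) - \nu(A)|$ and evaluate it at the singleton $A = \{\bme_k\}$ to obtain $\pi_{j\,|\,i}^{\bme_k}(\bme_k) - \pi_j(\bme_k) \leq d_{\tv}(\pi_{j\,|\,i}^{\bme_k}, \pi_j)$; this holds regardless of the sign of the left-hand side precisely because $d_{\tv} \geq 0$. Since $\mbP(\bB_i = \bme_k) \geq 0$, multiplying and summing over $k$ bounds the inner expression by $\sum_{k=0}^p \mbP(\bB_i = \bme_k)\, d_{\tv}(\pi_{j\,|\,i}^{\bme_k}, \pi_j)$.

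The final step is to recognize this weighted sum over the atoms $\bme_k$ as a partial expectation of the random variable $d_{\tv}(\pi_{j\,|\,i}^{\bB_i}, \pi_j)$ over $\bB_i$. Because every term is nonnegative, adding back the remaining atom $\mbP(\bB_i = \bm{0})\, d_{\tv}(\pi_{j\,|\,i}^{\bm{0}}, \pi_j) \geq 0$ only increases the sum, so the partial expectation is at most the full expectation $\mbE\, d_{\tv}(\pi_{j\,|\,i}^{\bB_i}, \pi_j)$. Summing over all pairs $(i,j)$ with $j \neq i$ and dividing by $M$ then yields the claimed bound. The positivity assumption $\mbP(B_{k,i}=1) > 0$ in Definition \ref{def2} guarantees that the conditional laws $\pi_{j\,|\,i}^{\bme_k}$ actually used are well defined.

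I expect the only genuinely delicate point to be the bookkeeping around the support of $\bB_i$: one must verify carefully that mutual exclusivity really does restrict $\bB_i$ to $\{\bm{0}, \bme_0, \ldots, \bme_p\}$, that the scalar event $\{B_{k,i}=1\}$ is the atom $\{\bB_i = \bme_k\}$ and not a larger set, and that the discarded $\bm{0}$ atom contributes a nonnegative (hence safely droppable) amount to the expectation. Everything else reduces to a one-sided relaxation of the identity $\Delta_N = \mcC_N$ via the elementary bound of a signed probability difference by the total variation distance.
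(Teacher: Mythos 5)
Your proposal is correct and follows essentially the same route as the paper's proof: exploit mutual exclusivity to identify the scalar event $\{B_{k,i}=1\}$ with the atom $\{\bB_i = \bme_k\}$, bound the resulting signed probability difference by $d_{\tv}\left(\pi_{j\,|\,i}^{\bme_k}, \pi_j\right)$, and recognize the weighted sum over atoms as (at most) the expectation $\mbE \, d_{\tv}\left(\pi_{j\,|\,i}^{\bB_i}, \pi_j\right)$. If anything, your explicit handling of the $\bm{0}$ atom is slightly more careful than the paper, which asserts $\norm{\bB_i}_1 = 1$ almost surely from mutual exclusivity alone (a step that strictly also requires exhaustiveness of the events $\mG_{0,i}, \ldots, \mG_{p,i}$), whereas your ``drop a nonnegative term'' argument works either way.
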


Definition \ref{def2} presents an alternative method of quantifying the strength of dependence 
to that of Definition \ref{def1}, 
obtained by expressing the covariances $\cov(B_{k,i}, \, B_{k,j})$ as deviations of conditional and marginal probabilities. 
The deviations $\mbP(B_{k,j} = 1 \,|\, B_{k,i} = 1) - \mbP(B_{k,j} = 1)$ 
quantify how much the probability of events $B_{k,j} = 1$ can change 
with knowledge of the event $B_{k,i} = 1$. 
Proposition \ref{prop:1} shows that these deviations 
can be bounded above by expectations of total variation distances 
$\mbE \, d_{\tv}\left(\pi_{j\,|\,i}^{\bB_i}, \, \pi_{j} \right)$, 
quantifying the extent to which $\bB_i$ can influence each $\bB_j$ ($j \in \{1, \ldots, M\}$) 
through the expected total variation distance between the marginal distribution of $\bB_j$ 
and the conditional distribution of $\bB_j$ given $\bB_i$,
taking the expectation with respect to $\bB_i$.  
Similar to  Definition \ref{def1},
the influence of each $i \in \{1, \ldots, M\}$ measured in this way 
is averaged, 
allowing the bound in Proposition \ref{prop:1} to permit 
a small number of highly influential elements 
provided the overall dependence is sufficiently weak. 

\s

\begin{definition}
\label{def3}
Denote the conditional distribution of $B_{k,j}$
($j \in \{i+1, \ldots, M\}$)
conditioning on 
$(B_{k,1}, \ldots, B_{k,i})$ 
by 
\beno
\mbP_{i,j,k}^{\bb}(v)
&\coloneqq& \mbP\left(B_{k,j} = v \;|\; (B_{k,1}, \ldots, B_{k,i}) = \bb\right),
&& v \in \{0,1\}, \; \bb \in \{0,1\}^{i},
\ee
for each $i \in \{1, \ldots, M-1\}$ and $k \in \{0, 1, \ldots, p\}$, 
and define 
\beno
\mdelta &\coloneqq&
\max\limits_{(\bb,\bb^\prime) \in \{0, 1\}^{i} \times \{0,1\}^{i} \,:\, b_t = b_t^\prime, \, t < i} \;  
d_{\tv}\left(\mbP_{i,j,k}^{\bb}, \, \mbP_{i,j,k}^{\bb^\prime}\right), 
&&  i \in \{1, \ldots, M\}, \; k \in \{0, 1, \ldots p\},  
\ee
and $\dep \coloneqq \max\{\mcD_{N,0}, \, \mcD_{N,1}, \ldots, \mcD_{N,p}\}$,
where 
\beno
\mcD_{N,k}
&\coloneqq&
\dfrac{1}{M} \,
\dsum_{i=1}^{M} \, \left( 1 + \dsum_{j \in \{i+1, \ldots, M\}} \,
\mdelta
\right)^2,
&& k \in \{0, 1, \ldots, p\}.
\ee 
\end{definition}

\s 

Definition \ref{def3} is reminiscent of mixing conditions 
and other dependence quantifications in other approaches to establishing 
exponential concentration inequalities for dependent data \citep{Kontorovich2008},
with related approaches having already been applied in statistical network science applications 
\citep{SchweinbergerStewart2020}. 
In each of the above cases,
larger values of the quantities $\mcC$, 
$\Delta_N$,
and $\dep$
will result in weaker concentration,
as will be seen in Lemma \ref{lem:concentration}. 
We present an application in Section \ref{sec:application} for which each can be bounded, 
and discuss our weak dependence conditions in further detail in Section \ref{sec:weak_dep}. 
It is worth noting that even 
in the case when edges in the random graph are independent,
the events $\mG_{k,i}$ and $\mG_{k,j}$ can still be dependent.
A case in point is given by the degrees of nodes. 
Even if edges in the random graph are assumed to be independent, 
the degree of node $i \in \mN$ and $j \in \mN \setminus \{i\}$ 
are dependent, 
as both depend on the value of edge variable $X_{i,j}$. 
As a result, 
the collection of random variables
$B_{k,1}, \ldots, B_{k,M}$ ($k \in \{0, 1, \ldots, p\}$)
will, 
in general, 
be a collection of dependent random variables, 
even when edges in the random graph are independent. 

\s

\begin{lemma}
\label{lem:concentration}
Consider a simple random graph $\bX$ and let $\empnox : \mbX \mapsto [0,1]^{p+1}$
be as defined in \eqref{eq:emp}.
Then,
for all $t > 0$,
\be
\label{conc1}
\mbP\left( \norm{\emp - \truth}_{\infty} \,\geq\, t \right)
&\leq& 2 \, \exp\left( - \dfrac{2 M  t^2}{\dep} + \log(1+p) \right), 
\ee
where $\dep$ is defined in Definition \ref{def3},
and 
\be
\label{conc2}
\mbP\left( \norm{\emp - \truth}_{\infty} \,\geq\, t \right)
&\leq& \dfrac{1 +\min\left\{\mcC_N, \; \Delta_N\right\}}{M \, t^2}, 
\ee
where $\mcC_N$ is defined in Definition \ref{def1} and 
$\Delta_N$ is defined in Definition \ref{def2}. 
\end{lemma}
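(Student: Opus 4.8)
The plan is to reduce both bounds to coordinatewise control of the centered averages $\empk - \truthk = \frac{1}{M}\sum_{i=1}^{M}(B_{k,i} - \mbE\,B_{k,i})$ and then recombine over $k$ through a union bound, using that $\norm{\emp - \truth}_{\infty} = \max_{k \in \{0,\ldots,p\}} |\empk - \truthk|$. Thus for every $t > 0$,
\[
\mbP\!\left(\norm{\emp - \truth}_{\infty} \geq t\right) \;\leq\; \dsum_{k=0}^{p} \mbP\!\left(|\empk - \truthk| \geq t\right),
\]
and it suffices to bound each summand and then pass to the worst dependence coefficient across $k$. The two inequalities correspond to two different ways of controlling the summands: a martingale argument for \eqref{conc1} and a second-moment argument for \eqref{conc2}.

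For \eqref{conc1} I would fix $k$ and apply an Azuma--McDiarmid inequality to the Doob martingale of $s_k(\bX) = \sum_{i=1}^{M} B_{k,i}$ with respect to the filtration generated by $B_{k,1}, \ldots, B_{k,M}$ in that order. Writing $V_i = \mbE[s_k \mid B_{k,1},\ldots,B_{k,i}] - \mbE[s_k \mid B_{k,1},\ldots,B_{k,i-1}]$, the crux is to show that the conditional oscillation of $V_i$ (as $B_{k,i}$ varies with the past held fixed) is at most the row sum $R_i = 1 + \sum_{j=i+1}^{M} \mdelta$. This is the coupling estimate underlying the concentration inequalities for weakly dependent sequences of \citet{Chazottes2007} and \citet{Kontorovich2008}: since flipping a single $B_{k,i}$ changes $s_k$ by exactly $1$, the effect of perturbing the $i$th coordinate on $\mbE[s_k \mid B_{k,1},\ldots,B_{k,i}]$ decomposes into the direct contribution $1$ plus the downstream contributions $\sum_{j>i}\big(\mbP(B_{k,j}=1\mid B_{k,\le i-1},B_{k,i}=1) - \mbP(B_{k,j}=1\mid B_{k,\le i-1},B_{k,i}=0)\big)$, each of which is bounded in absolute value by $d_{\tv}(\mbP_{i,j,k}^{\bb},\mbP_{i,j,k}^{\bb'}) \leq \mdelta$ from Definition \ref{def3}. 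Establishing this oscillation bound is the main obstacle, since it is where the total-variation coefficients $\mdelta$ must be tracked through the coupling of the conditional laws $\mbP_{i,j,k}^{\bb}$. Granting $\mathrm{osc}(V_i) \leq R_i$, Azuma's inequality in McDiarmid form gives $\mbP(|s_k - \mbE\,s_k| \geq Mt) \leq 2\exp(-2M^2 t^2 / \sum_{i} R_i^2)$; since $\sum_{i=1}^{M} R_i^2 = M\,\mcD_{N,k}$ this equals $2\exp(-2Mt^2/\mcD_{N,k})$. Summing over the $p+1$ coordinates and using $\dep = \max_k \mcD_{N,k} \geq \mcD_{N,k}$ yields $2(1+p)\exp(-2Mt^2/\dep) = 2\exp(-2Mt^2/\dep + \log(1+p))$, which is \eqref{conc1}.

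For \eqref{conc2} I would instead apply Chebyshev's inequality coordinatewise in the union bound, giving $\mbP(\norm{\emp - \truth}_{\infty} \geq t) \leq t^{-2}\sum_{k=0}^{p}\var(\empk)$. Expanding, $\sum_{k=0}^{p}\var(\empk) = M^{-2}\big(\sum_{k}\sum_{i=1}^{M}\var(B_{k,i}) + \sum_{k}\sum_{i=1}^{M}\sum_{j\neq i}\cov(B_{k,i},B_{k,j})\big)$. For the diagonal term I would use that each $B_{k,i}$ is Bernoulli, so $\var(B_{k,i}) \leq \mbP(B_{k,i}=1)$, together with the mutual exclusivity of $\mG_{0,i},\ldots,\mG_{p,i}$, which forces $\sum_{k=0}^{p}\mbP(B_{k,i}=1) \leq 1$ and hence $\sum_{k}\sum_{i}\var(B_{k,i}) \leq M$. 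The off-diagonal term equals $M\,\mcC_N$ by Definition \ref{def1}, so $\sum_{k}\var(\empk) \leq (1+\mcC_N)/M$ and therefore $\mbP(\norm{\emp-\truth}_{\infty} \geq t) \leq (1+\mcC_N)/(Mt^2)$. Finally, the identity $\cov(B_{k,i},B_{k,j}) = \mbP(B_{k,i}=1)\big(\mbP(B_{k,j}=1\mid B_{k,i}=1) - \mbP(B_{k,j}=1)\big)$ (valid since $\mbP(B_{k,i}=1)>0$ by Definition \ref{def2}) shows $\mcC_N = \Delta_N$, so the bound may equivalently be written with $\min\{\mcC_N,\Delta_N\}$, which is \eqref{conc2}; Proposition \ref{prop:1} then supplies the further total-variation control of $\Delta_N$. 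Relative to the martingale argument this step is routine, the only delicate points being the mutual-exclusivity bound on the diagonal and the covariance identity linking $\mcC_N$ and $\Delta_N$.
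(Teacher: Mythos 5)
Your proposal is correct and follows essentially the same route as the paper: for \eqref{conc1}, a coordinatewise Azuma/McDiarmid bound on the Doob martingale along the ordering $B_{k,1},\ldots,B_{k,M}$, with the martingale-difference oscillation controlled by $1+\sum_{j>i}\delta_{i,j,k}$ via the total-variation coefficients of Definition \ref{def3}, followed by a union bound over $k$ using $\dep=\max_k \mcD_{N,k}$; and for \eqref{conc2}, a union bound plus Chebyshev with the Bernoulli variance bound, mutual exclusivity to control the diagonal, and the covariance identity linking $\mcC_N$ and $\Delta_N$. The only (immaterial) differences are that you work with $s_k$ rather than $\empk$ and use $\sum_k \mbP(B_{k,i}=1)\leq 1$ where the paper uses equality.
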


We will leverage Lemma \ref{lem:concentration} to establish the statistical theory of this work. 
The exponential inequality in Lemma \ref{lem:concentration} 
is most suitable for demonstrating the almost sure convergence results presented in 
Corollaries \ref{cor:1} and \ref{cor:2}. 
First, 
in the proofs of coming theoretical results, 
we will utilize union bounds which will render weaker concentration inequalities insufficient for certain results. 
While there are two related inequalities due to \citet{Kontorovich2008} and \citet{Chazottes2007},
as well as inequalities utilizing the Dobrushin's uniqueness condition \citep{Dobruschin1968, Dagan2019}, 
these approaches will not lead to suitable inequalities for this work. 
Second, 
the weaker concentration inequality of Lemma \ref{lem:concentration} 
given in \eqref{conc2} presents bounds in terms of quantities which may be more interpretable,
that of covariances and deviations of marginal and conditional probabilities.  
We develop the results of this work,  
which are non-parametric, 
under the three different assumptions on the dependence of the events of interest within the random graph
that were presented in Definitions \ref{def1}, \ref{def2}, and \ref{def3}.
The applicability of the theory developed in this work rests on whether 
these assumptions are reasonable for the specific application considered.

\subsection{Non-asymptotic high probability bounds on the $\ell_{\infty}$-error of empirical distributions\label{sec:main_res}}

We derive uniform bounds on the error of empirical distributions 
of sequences of graph statistics in Theorem \ref{thm:main1}, 
in settings where the edge variables can be dependent  
and covering a broad range of sequences of graph statistics. 
These uniform bounds facilitate deriving upper bounds on rates of convergence.

\begin{theorem} 
\label{thm:main1}
Consider a simple random graph $\bX$
and let  
$\empnox : \mbX \mapsto [0,1]^{p+1}$ be as defined in \eqref{eq:emp}. 
Then
\beno
\mbP\left( \norm{\emp - \truth}_{\infty} \,<\, \sqrt{\dfrac{3}{2}} \,   
\sqrt{\dfrac{\dep \, \log(\max\{M, \, 1+p\})}{M}} \right)
&\geq& 1  - \dfrac{2}{\max\{M, \, 1+p\}^2}, 
\ee
where $\dep$ is defined in Definition \ref{def3},
and,
for all significance levels $\alpha \in (0, 1)$, 
\beno
\mbP\left(  \norm{\emp - \truth}_{\infty} \,<\, 
\sqrt{\dfrac{1 + \min\left\{|\mcC_N|, \; |\Delta_N|\right\}}{\alpha \, M}} \right)
&\geq& 1 - \alpha,
\ee
where $\mcC_N$ is defined in Definition \ref{def1} and $\Delta_N$ is defined in Definition \ref{def2}. 
\end{theorem}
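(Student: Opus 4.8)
The plan is to derive both inequalities directly from Lemma \ref{lem:concentration} by substituting a suitable radius $t$ into each of the two tail bounds \eqref{conc1} and \eqref{conc2} and then rearranging; no new machinery is needed beyond elementary algebra, since the lemma already supplies the tail estimates. Throughout I would write $n \coloneqq \max\{M, 1+p\}$ for brevity, and the whole argument amounts to a change of variables from the tail level $t$ to the confidence level.

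For the first bound, I would take $t = \sqrt{3/2}\,\sqrt{\dep\,\log n / M}$, chosen precisely so that $2Mt^2/\dep = 3\log n$. Substituting into the exponential inequality \eqref{conc1} gives $\mbP(\norm{\emp-\truth}_{\infty} \geq t) \leq 2\exp(-3\log n + \log(1+p)) = 2(1+p)\,n^{-3}$. Since $1+p \leq n$ by the definition of $n$, this is at most $2n\cdot n^{-3} = 2\,n^{-2}$, and taking complements yields $\mbP(\norm{\emp-\truth}_{\infty} < t) \geq 1 - 2\,n^{-2}$, which is the claimed statement. The only facts used are $1+p \leq n$ (to absorb both the $\log(1+p)$ term and the leading factor $1+p$) and the identity $\exp(-3\log n) = n^{-3}$.

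For the second bound, I would fix $\alpha \in (0,1)$ and take $t = \sqrt{(1+\min\{|\mcC_N|,|\Delta_N|\})/(\alpha M)}$, so that $Mt^2 = (1+\min\{|\mcC_N|,|\Delta_N|\})/\alpha$. Substituting into \eqref{conc2} gives
\[
\mbP(\norm{\emp-\truth}_{\infty} \geq t) \;\leq\; \frac{1+\min\{\mcC_N,\Delta_N\}}{M\,t^2} \;=\; \alpha\,\frac{1+\min\{\mcC_N,\Delta_N\}}{1+\min\{|\mcC_N|,|\Delta_N|\}}.
\]
To conclude $\mbP(\norm{\emp-\truth}_{\infty} \geq t) \leq \alpha$ it then remains to check that the trailing ratio is at most $1$, equivalently that $\min\{\mcC_N,\Delta_N\} \leq \min\{|\mcC_N|,|\Delta_N|\}$. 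This holds because $\min\{\mcC_N,\Delta_N\} \leq \mcC_N \leq |\mcC_N|$ and likewise $\min\{\mcC_N,\Delta_N\} \leq \Delta_N \leq |\Delta_N|$, so the left side is a common lower bound for $|\mcC_N|$ and $|\Delta_N|$, hence for their minimum; the denominator is strictly positive, so the ratio is well defined and bounded by $1$. Taking complements gives $\mbP(\norm{\emp-\truth}_{\infty} < t) \geq 1-\alpha$.

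Neither step presents a genuine obstacle: the entire content is the reparametrization of the tail bounds of Lemma \ref{lem:concentration}. The one point that warrants a moment's care is the passage from $\min\{\mcC_N,\Delta_N\}$ in \eqref{conc2} to the absolute-value form $\min\{|\mcC_N|,|\Delta_N|\}$ appearing in the statement; the absolute values guard against the a priori possibility that an averaged covariance is negative, and the short comparison above shows that this substitution is valid and, if anything, only makes the bound more conservative.
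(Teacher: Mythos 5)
Your proposal is correct and follows essentially the same route as the paper: both results are obtained by substituting the stated radius into the two tail bounds of Lemma \ref{lem:concentration} and taking complements. If anything, you treat the passage from $\min\{\mcC_N,\Delta_N\}$ in \eqref{conc2} to $\min\{|\mcC_N|,|\Delta_N|\}$ in the theorem statement more explicitly than the paper does, and your justification of that step is valid.
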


The quantities 
$\mcC_N$, $\Delta_N$, and $\dep$ 
place certain restrictions on the scope of what sequences of graph statistics can be chosen,
as these quantities cannot grow too quickly relative to $M$,  
otherwise consistency will not be established. 
While each represents a different quantification of the dependence in the random graph, 
each essentially requires that the sequences of graph statistics considered 
cannot produce events which are too strongly dependent. 
With regards to the definition of $\dep$ in Definition \ref{def3}, 
we assume that we bound the total variation distances of the conditional probabilities distributions with probability $1$. 
We can prove a result analogous to Theorem \ref{thm:main1} which
weakens this assumption,
allowing our results to cover a larger scope of sequences of graph statistics and random graphs.

\begin{theorem}
\label{thm:main2}
Consider a simple random graph $\bX$ and let $\empnox : \mbX \mapsto [0,1]^{p+1}$
be as defined in \eqref{eq:emp}.
Assume there exists a subset $\mbX_0 \subseteq \mbX$,
a constant $N_0 \geq 3$, and 
a function $r : \{3, 4, \ldots\} \mapsto (0, 1)$ such that,
for each $j \in \{1, \ldots, M\}$, 
\be
\label{eq:delta_r}
\mdelta(\mbX_0) &\coloneqq&
\max\limits_{(\bb,\bb^\prime) \in \mbB_{k,i}(\mbX_0) \times \mbB_{k,i}(\mbX_0) \,:\, b_t = b_t^\prime, \, t < i} \;
d_{\tv}\left(\mbP_{i,j,k}^{\bb}, \, \mbP_{i,j,k}^{\bb^\prime}\right)
&&  i \in \{1, \ldots, M\}, \; k \in \{0, 1, \ldots p\},
\ee 
where 
$\mbB_{k,i}(\mbX_0)$ is the subset of $\bb \in \{0,1\}^{i}$ 
($ i \in \{1, \ldots, M\}$) 
for which there exists $\bx \in \mbX_0$ for which $B_{k,t} = b_t$ ($t \leq i$), 
\be
\label{eq:RN}
r(N)
&\leq& \sqrt{\dfrac{\dep(\mbX_0) \, \log(\max\{M, 1+p\})}{M}}, 
\ee
with $\dep(\mbX_0)$ defined as in Definition \ref{def3} except using the definition of $\mdelta(\mbX_0)$ in \eqref{eq:delta_r},
and assume that  
\be
\label{eq:A2-1}
\mbP(\bX \in \mbX_0) 
&\geq& 1 - r(N), 
&& 
N \geq N_0. 
\ee
Then,
under assumptions \eqref{eq:delta_r}, \eqref{eq:RN}, and \eqref{eq:A2-1},
we have  
\beno
\mbP\left( \norm{\emp - \truth}_{\infty} \,<\,
\sqrt{\dfrac{27}{2}} \, 
\sqrt{\dfrac{\dep(\mbX_0) \, \log(\max\{M, 1+p\})}{M}}
\right)
&\geq& 1 - r(N) - \dfrac{4}{\max\{M, 1+p\}^2}. 
\ee
\end{theorem}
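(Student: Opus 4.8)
The plan is to localize the exponential concentration bound of Lemma~\ref{lem:concentration} (equivalently, of Theorem~\ref{thm:main1}) to the typical set $\mbX_0$, so that the dependence quantity $\dep$ is replaced by its localized counterpart $\dep(\mbX_0)$ at the cost of an additive failure term $r(N)$ and a worse absolute constant. The exponential inequality \eqref{conc1} is obtained, for each fixed $k$, from a martingale decomposition of $s_k(\bX) = M\,\empk$ along the filtration generated by $B_{k,1}, \ldots, B_{k,M}$, whose successive oscillations are controlled by $1 + \sum_{j>i}\mdelta$; this is precisely what produces $\mcD_{N,k}$, and hence $\dep$, inside the Azuma/McDiarmid step. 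The obstruction to simply rerunning that argument with $\mdelta(\mbX_0)$ is that the oscillation bound $1 + \sum_{j>i}\mdelta(\mbX_0)$ is only guaranteed for partial configurations lying in $\mbB_{k,i}(\mbX_0)$; off the typical set the martingale differences are uncontrolled.

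To circumvent this, I would introduce a modified process that agrees with $(B_{k,i})$ on the event $\{\bX \in \mbX_0\}$ but whose conditional increments are redefined off $\mbX_0$ so that the localized oscillation bound $1 + \sum_{j>i}\mdelta(\mbX_0)$ holds along every sample path (a stopping-time / coupling device in the spirit of the method of typical bounded differences). Writing $\widetilde{\empnox}$ for the associated empirical distribution, this modified process has dependence quantity $\dep(\mbX_0)$ by construction, so Theorem~\ref{thm:main1} applies verbatim and yields $\norm{\widetilde{\empnox} - \mbE\,\widetilde{\empnox}}_\infty < \sqrt{3/2}\,\sqrt{\dep(\mbX_0)\,\log(\max\{M,1+p\})/M}$ with probability at least $1 - 2/\max\{M,1+p\}^2$.

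It then remains to pass back to $\empnox$ and to the true center $\truth$. Since the two processes coincide on $\{\bX \in \mbX_0\}$, which by \eqref{eq:A2-1} has probability at least $1 - r(N)$, I would (i) control the center shift by $\norm{\mbE\,\widetilde{\empnox} - \truth}_\infty \leq \mbP(\bX \notin \mbX_0) \leq r(N)$, using that every coordinate of $\empnox$ and $\widetilde{\empnox}$ lies in $[0,1]$ and the two agree on $\mbX_0$; and (ii) replace $\widetilde{\empnox}$ by $\empnox$ on $\{\bX \in \mbX_0\}$ at no cost. Combining these through the triangle inequality, bounding $r(N)$ by $\sqrt{\dep(\mbX_0)\log(\max\{M,1+p\})/M}$ via \eqref{eq:RN}, and absorbing the event $\{\bX \notin \mbX_0\}$ together with the concentration failure into the overall failure probability yields the stated threshold $\sqrt{27/2}\,\sqrt{\dep(\mbX_0)\log(\max\{M,1+p\})/M}$ and failure bound $r(N) + 4/\max\{M,1+p\}^2$; the factor $\sqrt{27/2} = 3\sqrt{3/2}$ reflects the accumulation of the concentration term together with two corrections of the same order $\sqrt{\dep(\mbX_0)\log(\max\{M,1+p\})/M}$, each bounded via \eqref{eq:RN}, while the doubled $4/\max\{M,1+p\}^2$ collects the slack from the triangle-inequality comparison and a possibly second application of Theorem~\ref{thm:main1}.

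The main obstacle is the construction in the second step: the modified process must (a) agree with the original exactly on $\mbX_0$, (b) obey the localized oscillation bound $1 + \sum_{j>i}\mdelta(\mbX_0)$ on every path, and (c) have a center $\mbE\,\widetilde{\empnox}$ that stays within $r(N)$ of $\truth$. Verifying that the off-$\mbX_0$ redefinition of the conditional laws does not inflate the oscillations beyond $\mdelta(\mbX_0)$ — while keeping the coupling measurable with respect to the natural filtration generated by $B_{k,1}, \ldots, B_{k,M}$ — is the delicate point, precisely because the definition of $\mdelta(\mbX_0)$ restricts the total variation maximum to configurations in $\mbB_{k,i}(\mbX_0)$, and the extension must respect that restriction while remaining consistent across the filtration.
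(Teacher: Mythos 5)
Your route is genuinely different from the paper's. The paper does not construct a modified process: it conditions directly on the event $\{\bX \in \mbX_0\}$ via the law of total probability, rewrites $\truth = \mbE[\emp \,|\, \bX \in \mbX_0]\,\mbP(\bX \in \mbX_0) + \mbE[\emp \,|\, \bX \in \mbX_0^c]\,\mbP(\bX \in \mbX_0^c)$ by the law of total expectation, absorbs the off-set contribution and the recentering error into two additive $r(N)$ terms, splits the resulting event three ways with an elementary union-bound lemma (Lemma \ref{lem:AB_bound}), and then applies the martingale concentration inequality of Lemma \ref{lem:concentration} \emph{under the conditional measure}, where the localized oscillation bounds defining $\dep(\mbX_0)$ are asserted to hold with probability $1$. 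Your accounting at the end matches the paper's in spirit --- the threshold $\sqrt{27/2} = 3\sqrt{3/2}$ arises in both arguments from one concentration term plus two corrections of order $r(N)$, each controlled via \eqref{eq:RN}, and your failure probability $r(N) + 2/\max\{M,1+p\}^2 \leq r(N) + 4/\max\{M,1+p\}^2$ closes with room to spare.

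The gap is the one you flag yourself: the existence of the modified process $\widetilde{\empnox}$ satisfying (a), (b), (c) simultaneously is the entire content of the argument in your route, and it is not carried out. It is also not automatic. The quantity $\mdelta(\mbX_0)$ only restricts the \emph{pairs} $(\bb,\bb^\prime) \in \mbB_{k,i}(\mbX_0) \times \mbB_{k,i}(\mbX_0)$ over which the maximum of $d_{\tv}(\mbP_{i,j,k}^{\bb},\mbP_{i,j,k}^{\bb^\prime})$ is taken; the conditional laws $\mbP_{i,j,k}^{\bb}$ themselves are still computed under the full measure $\mbP$ and therefore average over future trajectories that exit $\mbX_0$. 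Any redefinition of the conditional increments off $\mbX_0$ perturbs the conditional expectations $\mbE[\widetilde{\empk} \,|\, \mF_{k,m}]$ even at histories lying inside $\mbB_{k,m}(\mbX_0)$, so property (b) --- oscillations bounded by $(1 + \sum_{i > m}\delta_{m,i,k}(\mbX_0))/M$ on \emph{every} path --- does not follow from agreement on $\mbX_0$ plus the assumed total-variation bounds; it requires a concrete coupling or stopping-time construction together with a verification that the redefined kernels do not inflate the relevant total variation distances. Because the $B_{k,i}$ are themselves dependent, the off-the-shelf method of typical bounded differences (which is stated for functions of independent coordinates) does not apply verbatim, so this step needs a bespoke argument. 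Until that construction is supplied, the proposal establishes the theorem only conditionally on a lemma whose proof is at least as delicate as the theorem itself; the paper's conditioning argument avoids the issue entirely by never modifying the process.
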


\s

Theorem \ref{thm:main2} extends the results of Theorem \ref{thm:main1} 
to settings where certain configurations of the network $\bX \in \mbX$ may give rise 
to large total variation distances in the definition of $\mdelta$ in Definition \ref{def3},
which in turn would give rise to larger values of $\dep$ as defined in Definition \ref{def3}.   
A case in point is again given by the degree distribution.  
Suppose we condition on the node degrees of nodes $i \in \{1, 2, \ldots, N-1\}$,
i.e.,
all but one node degree,
and consider the conditional probability distributions of $B_{0,N}, B_{1,N}, \ldots, B_{N-1,N}$
given the node degrees of nodes $i \in \{1, 2, \ldots, N-1\}$. 
Taking the maximum over all possible graph configurations, 
we can induce a total variation distance of $1$ 
by considering the complete and empty graphs as the conditioning graphs,
which will force $B_{0,N}$ and $B_{N-1,N}$ to be either $0$ or $1$ (almost surely), 
depending on the conditioning event in the definition of $\delta_{N-1,N,0}$ 
(for $B_{0,N}$)
and $\delta_{N-1,N,N-1}$
(for $B_{N-1,N}$)
in Definition \ref{def3}. 
However, 
the cases where a single network has either no edges or all edges would be unlikely events for most applications and models. 
This is where Theorem \ref{thm:main2} innovates upon Theorem \ref{thm:main1}.
Under the setup of Theorem \ref{thm:main2},
we can circumvent pathological cases such as the example above
which occur with low probability by restricting the definition of $\mdelta(\mbX_0)$ to only subsets $\mbX_0 \subset \mbX$
which both occur with high probability and for which the total variation distances defining $\mdelta(\mbX_0)$
are not too large to render the results of our statistical theory meaningless.

\hide{
Consider the scenario where $m = 1$ and focus on the conditional probability distribution of node $2 \in \mN$ 
given all other nodes $j \in \mN \setminus \{1, 2\}$, 
where 
\beno
B_{d,i} 
&\coloneqq& \one\left(\, \dsum_{j \in \mN \setminus \{i\}} \, X_{i,j} \,=\, d \right),
&& i \in \mN, 
& d \in \{0, 1, \ldots, N-1\}.  
\ee 
Define $(\bb, \bb^\prime) \in \{0,1\}^N \times \{0,1\}^N$ as follows:
\ben
\item Set $b_j^\prime = b_j$ for all $j \in \mN \setminus \{1\}$ for any value $b_j \in \{0, 1\}$, and 
\item Define $b_1 = 0$ and $b_1^\prime = 1$. 
\een 
In this case, 
\beno
d_{\tv}(\mbP_{0,2}^{\bb}, \, \mbP_{0,2}^{\bb^\prime})
\= \dfrac{1}{2} \, \dsum_{d=0}^{1} \, \left| \mbP_{0,2}^{\bb}(d) - \mbP_{0,2}^{\bb^\prime}(d) \right|
\= 1,
\ee
because $\mbP_{0,2}^{\bb}(0) = 1$ and $\mbP_{0,2}^{\bb^\prime}(1) = 1$,
owing to the construction of the tuple $(\bb, \bb^\prime)$ above. 
In words, 
this occurs because if nodes $1, 3, \ldots, N$ all have degree $0$,
which is the event 
\beno
B_{0,i} \= 
\one\left( \, \dsum_{j \in \mN \setminus \{i\}} \, X_{i,j} \,=\, 0 \right)
\= 0,
&& i \in \{1, 3, \ldots, N\},  
\ee 
then node $2$ must have degree equal to $0$;  
and conversely, 
if node $1$ has degree greater than $0$, 
i.e.,
\beno
B_{0,1} 
\= \one\left( \, \dsum_{j \in \mN \setminus \{i\}} \, X_{i,j} \,=\, 0 \right)
\= 0,
\ee
and nodes $3, \ldots, N$ all have degree $0$,
then node $2$ cannot have have degree equal to $0$, 
because there must be some node connected to node $1$ if $B_{0,1} = 0$. 
However, 
the case where a single network has only a single edge will be an unlikely event for most models 
and applications of interest. 
This is where Theorem \ref{thm:main2} innovates upon Theorem \ref{thm:main1}. 
Under the setup of Theorem \ref{thm:main2},
we can circumvent pathological cases such as the example above 
which occur with low probability by restricting the definition of $\mdelta$ to only subsets $\mbX_0 \subset \mbX$ 
which occur with high probability and for which the total variation distances defining $\mdelta$ 
are not too large to render the results of our statistical theory meaningless.  
This allows our results to extend to a much greater scope of networks and sequences of graph statistics. 
}

\subsection{Applications to degree distributions} 
\label{sec:deg_app}

We next prove a corollary to Theorem \ref{thm:main2} 
for the empirical degree distribution, 
which was given as an example in Section \ref{sec:1} and is defined in \eqref{eq:deg_ex}. 
The degree distribution is one of the most fundamental aspects of a network,
and the importance of this result lies in the fact that often practitioners of 
statistical network science
rely on information and insights gained through the empirical degree distributions. 
Corollary \ref{cor:1} provides rigorous statistical foundations 
for drawing inferences from empirical degree distributions of networks,
covering a broad range of settings that notably include networks with dependent edge variables. 
While Corollary \ref{cor:1}
is stated for degree distributions of undirected random graphs, 
it is straightforward to extend the results to directed random graphs, 
covering either total degree distributions, 
as well as out-degree and in-degree distributions. 
We do not present this extension, 
but note that we appeal to this result in our application to a directed network in Section \ref{sec:application}. 

To lay the foundation for Corollary \ref{cor:1}, 
we will let $M = N$ and $s_d(\bx)$ ($d \in \{0, 1, \ldots, N-1\}$)
be as defined in \eqref{eq:deg_ex}
and define $\bB_d \coloneqq (B_{d,1}, \ldots, B_{d,N})$ ($d \in \{0, 1, \ldots, N-1\}$) 
by defining each  
\beno
B_{d,i} 
&\coloneqq&  
\one\left( \, \dsum_{j \in \mN \setminus \{i\}} \, X_{i,j} \,=\, d \right),
&& d \in \{0, 1, \ldots, N-1\}, \;\; i \in \mN.
\ee
Following the approach of Theorem \ref{thm:main2}, 
we assume there exist a subset $\mbX_0 \subseteq \mbX$ and constant $N_0 \geq 3$ such that 
\be
\label{eq:cor1_assumption_a}
\mbP(\bX \in \mbX_0)
&\geq& 1 - \dfrac{2}{N^2},
&& N \geq N_0,  
\ee
and there exists,  
for each $m \in \mN$, 
a subset $\mM_m \subset \mN$ 
and constants $\alpha_{m,i} \in [0, \infty)$ ($i \in \mN \setminus (\mM_m \cup \{m\}$)
such that 
\be
\label{eq:cor1_assumption_b}
\max\limits_{(\bb,\bb^\prime) \in \mbB_{d,m}(\mbX_0) \times \mbB_{d,m}(\mbX_0) \,:\, b_t = b_t^\prime, \, t < m} \;
d_{\tv}\left(\mbP_{m,i,d}^{\bb}, \, \mbP_{m,i,d}^{\bb^\prime}\right)
\;\leq\; \alpha_{m,i}, 
& 
i \in \mN \setminus (\mM_m \cup \{m\}), 
\;d \in \{0, 1, \ldots, N-1\},  
\ee
where 
$\mbB_{d,i}(\mbX_0)$ is the subset of $\bb \in \{0,1\}^{i}$
($i \in \{1, \ldots, M\}$)
for which there exists $\bx \in \mbX_0$ for which $B_{d,t} = b_t$ ($t \leq i$).
As a result of this assumption,  
for each $m \in \mN$ and $d \in \{0, 1, \ldots, N-1\}$, 
\beno
1 + \dsum_{i \in \mN \setminus \{m\}} \, \delta_{m,i,d}(\mbX_0)
&\leq& 1 + M_m + \dsum_{i \in \mN \setminus (\mM_m \,\cup \, \{m\})} \, \alpha_{m,i}, 
\ee
defining $M_m \coloneqq |\mM_m|$ ($m \in \mN$) and 
noting that $d_{\tv}\left(\mbP_{m,i,d}^{\bb}, \; \mbP_{m,i,d}^{\bb^\prime} \right) \leq 1$,
which in turn implies that 
\be
\label{eq:cor1_dep_bound}
\dep(\mbX_0) 
&\coloneqq& \max\limits_{d \in \{0, 1, \ldots, N-1\}} \, \left[ 
\dfrac{1}{M} \,
\dsum_{m=1}^{M} \, \left( 1 + \dsum_{i \in \{m+1, \ldots, M\} \setminus \{m\}} \,
\delta_{m,i,d}(\mbX_0)
\right)^2 \right]
&\leq& (1 + M_{\max} + \alpha_{\max})^2,
\ee
defining $M_{\max} \coloneqq \max\{M_1, \ldots, M_N\}$ and 
$\alpha_{\max} = \max_{m \in \mN} \sum_{i \in \mN \setminus (\mM_m \,\cup\, \{m\})}  \alpha_{m,i}$.

The assumption of both \eqref{eq:cor1_assumption_a} and \eqref{eq:cor1_assumption_b},
which leads to the bound on $\dep(\mbX_0)$ given in \eqref{eq:cor1_dep_bound},
is comparable to strong mixing conditions 
(i.e., it is reminiscent of the $\alpha$-mixing condition) \citep{Bradley2005},
placed under a high-probability condition.  
Bounding $\dep$ and $\dep(\mbX_0)$ becomes straightforward under independence assumptions placed on the degrees of nodes,
but as mentioned already, 
this assumption would be unreasonable because degrees are not independent.  
A more realistic assumption would be a form of M-dependence similar to the local dependence assumption 
of local dependence random graph models \citep{Schweinberger2015,SchweinbergerStewart2020},
which we utilize in the application presented in Section \ref{sec:application}. 
However, 
even this assumption may be too strong for certain applications. 
The conditions outlined in \eqref{eq:cor1_assumption_a} and \eqref{eq:cor1_assumption_b}, 
which lead to the bound given in \eqref{eq:cor1_dep_bound}, 
represent a compromise between rich local dependence and weak global dependence,
by incorporating a form of strong local dependence 
(controlled via $M_{\max}$)
and weak global dependence reminiscent of strong mixing conditions (controlled via $\alpha_{\max}$). 
In practical terms, 
this assumption allows arbitrarily strong influence of the degree of a node $m \in \mN$ 
on other nodes 
$j \in \mM_m$, 
but permits only weak influence of the degrees of nodes $i \in \mN \setminus (\mM_m \cup \{m\})$.

\begin{corollary}
\label{cor:1}
Under the assumptions of Theorem \ref{thm:main2}
with $s_k(\bx)$ given by \eqref{eq:deg_ex} 
and the assumption of both \eqref{eq:cor1_assumption_a} and \eqref{eq:cor1_assumption_b},
there exists a constant $N_0 \geq 3$ such that 
\beno
\mbP\left( \norm{\emp - \truth}_{\infty} \,<\, (1 + M_{\max} + \alpha_{\max}) \, 
\sqrt{\dfrac{3}{2}} \, \sqrt{\dfrac{\log(N)}{N}} \right)
&\geq& 1 - \dfrac{6}{N^2},
&& N \geq N_0.  
\ee 
Assuming 
$M_{\max} + \alpha_{\max} = o\left(\hspace{-.1cm}\sqrt{N / \log(N)} \hspace{.05cm}\right)$, 
the error $\norm{\emp - \truth}_{\infty}$ converges almost surely to $0$ as $N \to \infty$.   
\end{corollary}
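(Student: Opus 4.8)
The plan is to treat the two assertions separately: the non-asymptotic high-probability bound is obtained by specializing Theorem \ref{thm:main2} to the degree distribution, while the almost-sure convergence follows from a Borel--Cantelli argument built on the summability of the failure probability $6/N^2$.

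For the first assertion I would begin with the dimensional bookkeeping specific to the degree distribution: here $M = N$, and since the degrees range over $\{0, 1, \ldots, N-1\}$ we have $p + 1 = N$, so $\max\{M, 1+p\} = N$. This replaces $\log(\max\{M, 1+p\})$ by $\log N$ and $\max\{M, 1+p\}^2$ by $N^2$ throughout the conclusion of Theorem \ref{thm:main2}. Next I would verify that the hypotheses of Theorem \ref{thm:main2} hold with the choice $r(N) = 2/N^2$. Hypothesis \eqref{eq:delta_r} is exactly the content of \eqref{eq:cor1_assumption_b} (using $\delta_{m,i,d}(\mbX_0) \le \alpha_{m,i}$ away from the local set $\mM_m$ and the trivial bound $\le 1$ inside it), and \eqref{eq:A2-1} is precisely \eqref{eq:cor1_assumption_a}. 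The remaining hypothesis \eqref{eq:RN} requires $2/N^2 \le \sqrt{\dep(\mbX_0)\,\log N / N}$; since every summand in Definition \ref{def3} is at least $1$, we have $\dep(\mbX_0) \ge 1$, so the right-hand side is at least $\sqrt{\log N / N}$, and $2/N^2 \le \sqrt{\log N / N}$ holds for all $N \ge 2$. With the hypotheses in force, Theorem \ref{thm:main2} gives the bound with probability at least $1 - r(N) - 4/N^2 = 1 - 6/N^2$; substituting $\dep(\mbX_0) \le (1 + M_{\max} + \alpha_{\max})^2$ from \eqref{eq:cor1_dep_bound}, so that $\sqrt{\dep(\mbX_0)} \le 1 + M_{\max} + \alpha_{\max}$, yields the displayed deviation bound.

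For the second assertion I would set $\epsilon_N \coloneqq (1 + M_{\max} + \alpha_{\max})\sqrt{3/2}\,\sqrt{\log N / N}$ and let $A_N$ denote the event $\{\norm{\emp - \truth}_{\infty} \ge \epsilon_N\}$. The first part gives $\mbP(A_N) \le 6/N^2$ for all $N \ge N_0$, and since $\sum_{N} 6/N^2 < \infty$, the Borel--Cantelli lemma yields $\mbP(\limsup_N A_N) = 0$; that is, almost surely $\norm{\emp - \truth}_{\infty} < \epsilon_N$ for all sufficiently large $N$. It then remains to check $\epsilon_N \to 0$ under the growth condition $M_{\max} + \alpha_{\max} = o(\sqrt{N/\log N})$. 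Writing $\epsilon_N = \sqrt{3/2}\,[\,\sqrt{\log N / N} + (M_{\max} + \alpha_{\max})\sqrt{\log N / N}\,]$, the first term tends to $0$, while the second equals $o(\sqrt{N/\log N})\cdot \sqrt{\log N / N} = o(1)$, so $\epsilon_N \to 0$ and the claimed almost-sure convergence follows.

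The steps above are mostly routine specialization and a textbook Borel--Cantelli argument, so the substantive content is front-loaded into verifying the hypotheses of Theorem \ref{thm:main2} and establishing \eqref{eq:cor1_dep_bound}. The point demanding the most care is the almost-sure statement: because $M_{\max}$, $\alpha_{\max}$, and the law of $\bX$ all depend on $N$, the convergence must be read along a sequence of random graphs of growing order $N$, and the Borel--Cantelli conclusion presupposes these are realized on a common probability space, so I would state this coupling convention explicitly. A secondary point to track is the constant: a verbatim application of Theorem \ref{thm:main2} delivers the factor $\sqrt{27/2}$, so matching the sharper $\sqrt{3/2}$ in the statement requires a slightly tighter accounting than the generic reduction provides, and I would confirm this improvement carefully rather than assume it.
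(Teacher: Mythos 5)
Your proposal is correct and follows essentially the same route as the paper's own proof: specialize Theorem \ref{thm:main2} with $M=N$, $p=N-1$, $r(N)=2/N^2$ so that $\max\{M,1+p\}=N$ and the failure probability is $r(N)+4/N^2=6/N^2$, insert the bound $\dep(\mbX_0)\le(1+M_{\max}+\alpha_{\max})^2$ from \eqref{eq:cor1_dep_bound}, and finish the almost-sure claim by Borel--Cantelli after checking $\epsilon_N\to 0$ under the growth condition. Your explicit verification of hypothesis \eqref{eq:RN} via $\dep(\mbX_0)\ge 1$ is a detail the paper omits, and the constant issue you flag is real: a verbatim application of Theorem \ref{thm:main2} yields $\sqrt{27/2}$, whereas the paper's proof of the corollary silently writes $\sqrt{3/2}$ when invoking that theorem, so your caveat points to an inconsistency in the paper itself rather than a gap in your argument.
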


Corollary \ref{cor:1} demonstrates uniform convergence of the empirical degree distribution as $N \to \infty$, 
provided the dependence among the degrees is not globally strong, 
in the sense that we require the quantities $M_{\max}$ and $\alpha_{\max}$ 
to satisfy 
$M_{\max} + \alpha_{\max} = o\left(\hspace{-.1cm}\sqrt{N / \log(N)} \hspace{.05cm}\right)$. 
Of note, 
neither the sparsity of the random graph nor the heterogeneity of node degrees 
affect our consistency theory. 
This means that a network can demonstrate marked heterogeneity among the node degrees
and the empirical degree distribution $\emp$ will be asymptotically stable in the sense that 
it convergences uniformly to $\truth$,
again provided the weak dependence criterion of the corollary is satisfied. 

Lastly, 
we prove a second corollary to Theorem \ref{thm:main1} under a sparse inhomogeneous Bernoulli random graph model. 

\begin{corollary}
\label{cor:bern}
Consider an inhomogeneous Bernoulli random graph $\bX$,
i.e., 
edge variables $X_{i,j}$ ($\{i,j\} \subset \mN$) are assumed 
to be 
independent Bernoulli random variables   
with heterogeneous probabilities $\mbP(X_{i,j} = 1) \in (0, 1)$ ($\{i,j\} \subset \mN$).
Then 
\beno
\Delta_N
&\leq& \dfrac{2}{N} \dsum_{i=1}^{N} \mbE \, d_i(\bX),
&& \mbP\left(  \norm{\emp - \truth}_{\infty} \,<\,
\sqrt{\dfrac{1 + (2 \,/\, N) \sum_{i=1}^{N} \,  \mbE \, d_i(\bX)}{\alpha \, N}} \, \right)
&\geq& 1 - \alpha,
\ee
for all $\alpha \in (0, 1)$, 
where $d_i(\bX)$ is the degree of node $i \in \mN$ and 
$\Delta_N$ is defined in Definition \ref{def2}. 
\end{corollary}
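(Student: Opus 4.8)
The plan is to bound the average dependence measure $\Delta_N$ explicitly and then feed the bound into the second (covariance) inequality of Theorem~\ref{thm:main1} with $M = N$. The first observation is that the summand defining $\Delta_N$ in Definition~\ref{def2} is term-by-term a covariance of indicators, since $\mbP(B_{d,i}=1)\big(\mbP(B_{d,j}=1\mid B_{d,i}=1)-\mbP(B_{d,j}=1)\big) = \cov(B_{d,i},B_{d,j})$. Hence, writing $B_{d,i} = \one(d_i(\bX)=d)$ and $p_{i,j} \coloneqq \mbP(X_{i,j}=1)$, the problem reduces to the second-moment computation $\Delta_N = \frac{1}{N}\sum_{i=1}^{N}\sum_{j\neq i}\sum_{d=0}^{N-1}\cov(B_{d,i},B_{d,j})$ (so in fact $\Delta_N = \mcC_N$ here).

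Second, I would exploit the only channel of dependence between the degrees of two distinct nodes $i,j$, namely the shared edge $X_{i,j}$. Writing $d_i(\bX) = X_{i,j} + S_i$ and $d_j(\bX) = X_{i,j} + S_j$ with $S_i \coloneqq \sum_{l \in \mN\setminus\{i,j\}} X_{i,l}$ and $S_j \coloneqq \sum_{l \in \mN\setminus\{i,j\}} X_{j,l}$, the independence of the edge variables makes $X_{i,j}$, $S_i$, and $S_j$ mutually independent, so $d_i$ and $d_j$ are conditionally independent given $X_{i,j}$. Applying the two-point identity $\cov(f(X),g(X)) = \Var(X)\,(f(1)-f(0))(g(1)-g(0))$, valid for any Bernoulli $X$, with $f(x) = \mbP(S_i = d-x)$ and $g(x) = \mbP(S_j = d-x)$, gives the closed form
\[
\cov(B_{d,i},B_{d,j}) = p_{i,j}(1-p_{i,j})\big(\mbP(S_i = d-1)-\mbP(S_i = d)\big)\big(\mbP(S_j = d-1)-\mbP(S_j = d)\big).
\]
Summing over $d$ and using $S_i \perp S_j$ to evaluate the resulting convolutions, the bracketed factors collapse to a second difference,
\[
\sum_{d=0}^{N-1}\cov(B_{d,i},B_{d,j}) = p_{i,j}(1-p_{i,j})\big(2\,\mbP(S_i = S_j) - \mbP(S_i - S_j = 1) - \mbP(S_i - S_j = -1)\big),
\]
whose bracket lies in $[-1,2]$ (it is at most $2\,\mbP(S_i=S_j)\le 2$ and at least $-\mbP(|S_i-S_j|=1)\ge -1$). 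Thus $\big|\sum_d \cov(B_{d,i},B_{d,j})\big| \le 2\,p_{i,j}(1-p_{i,j}) \le 2\,p_{i,j}$ for every pair, and summing over $i,j$ with $\mbE\, d_i(\bX) = \sum_{j\neq i} p_{i,j}$ yields $|\Delta_N| \le \frac{1}{N}\sum_i\sum_{j\neq i} 2p_{i,j} = \frac{2}{N}\sum_{i=1}^N \mbE\, d_i(\bX)$, the first assertion.

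For the probability statement I would invoke the covariance inequality of Theorem~\ref{thm:main1} at $M=N$. Because $\min\{|\mcC_N|,|\Delta_N|\} \le |\Delta_N| \le \frac{2}{N}\sum_i \mbE\, d_i(\bX)$ and the map $t\mapsto \sqrt{(1+t)/(\alpha N)}$ is increasing, replacing $\min\{|\mcC_N|,|\Delta_N|\}$ by its upper bound only enlarges the confidence radius; hence the event $\{\norm{\emp-\truth}_\infty < \sqrt{(1+(2/N)\sum_i \mbE\,d_i(\bX))/(\alpha N)}\}$ contains the event guaranteed by Theorem~\ref{thm:main1} and so has probability at least $1-\alpha$, for every $\alpha\in(0,1)$.

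The main obstacle is the summation bound in the second step. Each covariance $\cov(B_{d,i},B_{d,j})$ is of order $p_{i,j}$, but there are $N$ degrees $d$ per node pair, so a crude termwise bound would give $|\Delta_N| = O\big(\frac{1}{N}\sum_{i,j} N p_{i,j}\big)$, which grows like $\mbE\,d_i(\bX)$ times $N$ and is far too weak for consistency. The cancellation furnished by the telescoping into a second difference of the law of $S_i - S_j$ (equivalently, the unimodality of the Poisson-binomial distribution of $S_i$, for which $\sum_d|\mbP(S_i=d-1)-\mbP(S_i=d)| = 2\max_d \mbP(S_i=d)\le 2$) is precisely what collapses the sum over $d$ back to order $p_{i,j}$ and makes the clean bound $\frac{2}{N}\sum_i \mbE\, d_i(\bX)$ attainable.
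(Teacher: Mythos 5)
Your proposal is correct, and it reaches the bound by a genuinely different route than the paper. You first observe that $\Delta_N=\mcC_N$ (each summand in Definition~\ref{def2} is exactly $\cov(B_{d,i},B_{d,j})$), then compute each covariance in closed form: conditioning on the shared edge $X_{i,j}$ makes $d_i$ and $d_j$ conditionally independent, and the Bernoulli two-point identity gives $\cov(B_{d,i},B_{d,j})=p_{i,j}(1-p_{i,j})\big(\mbP(S_i=d-1)-\mbP(S_i=d)\big)\big(\mbP(S_j=d-1)-\mbP(S_j=d)\big)$, after which the sum over $d$ telescopes to $2\,\mbP(S_i=S_j)-\mbP(|S_i-S_j|=1)\in[-1,2]$ and collapses to order $p_{i,j}$. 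The paper instead works directly with the signed deviations $\mbP(d_j=d\,|\,d_i=d)-\mbP(d_j=d)$, splits them by the law of total probability over $X_{i,j}$, and controls $\mbP(X_{i,j}=1\,|\,d_i=d)$ by $d/(N-1)$ via comparison with the extremal homogeneous measure $\mbB_{\max}$; the weighted sum over $d$ then produces $\mbE\,d_i$ term by term. Both arguments exploit the same structural fact --- the shared edge is the only dependence channel between two degrees --- but your cancellation is exact rather than obtained by an extremal-measure comparison, and it delivers a bound on $|\Delta_N|$ (equivalently $|\mcC_N|$), which is precisely the quantity appearing inside the radius $\sqrt{(1+\min\{|\mcC_N|,|\Delta_N|\})/(\alpha M)}$ of Theorem~\ref{thm:main1}, whereas the paper's argument as written only bounds $\Delta_N$ from above. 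Your diagnosis of the main obstacle --- that a termwise bound over the $N$ values of $d$ would lose a factor of $N$, and that the second-difference cancellation (or unimodality of the Poisson--binomial law of $S_i$) is what rescues the rate --- is exactly right. The final step, feeding the bound into the covariance inequality of Theorem~\ref{thm:main1} with $M=N$ and using monotonicity of $t\mapsto\sqrt{(1+t)/(\alpha N)}$, matches the paper.
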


It is worth noting that that Corollary \ref{cor:bern} covers latent space models which assume that edge 
variables are conditionally independent given the latent variables, 
as such models can be represented as inhomogeneous Bernoulli random graph models. 
This corollary helps to highlight a key fact concerning the rates of convergence of empirical degree distributions.  
Sparsity can help to mitigate the influence of dependence among degrees, 
as the quantity $\Delta_N$ can be bounded above by the average expected degree of nodes. 
If the expected degrees of nodes is bounded above, 
then $\Delta_N$ will likewise be bounded above. 
More generally, 
we can establish consistent estimation provided the expected degrees of nodes grows slower than $N$,
in the sense that $\norm{\emp - \truth}_{\infty}$ will converge in probability to $0$.

\subsection{Applications to edgewise shared partner distributions} 
\label{sec:esp_app}

The study of transitivity and dependence in network data applications dates back to at least the 1970s
with work by 
\citet{Holland1972},
and modeling expressions of network transitivity through triangle counts was a key motivation 
in the seminal work of \citet{Frank1986},
and has been a focus in the exponential-family random graph model literature 
\citep{Lusher2012}. 
In general, 
we can understand the network phenomena of transitivity within a statistical context 
as the change in the conditional probability of an edge due to the presence or absence of common connections to 
other nodes.
Colloquially, 
this may be described in the context of social network analysis as 
{\it the friend of my friend is my friend},
where it is common to observe positive transitivity, 
meaning two nodes are more likely to be connected if they have a common connection to at least one other node in the network,
compared with the case when they have no other connections. 
A more modern model of network transitivity is given by curved exponential family parameterizations 
of edgewise shared partner distributions 
\citep{Hunter2006,Robins2007}. 
Additionally, 
the edgewise shared partner distribution is frequently 
included in goodness-of-fit diagnostics for evaluating the fit of estimated models \citep{Hunter2008}. 
Notably, 
the {\tt R} package {\tt ergm} includes the edgewise shared partner distribution as a default 
in its goodness-of-fit diagnostics
\citep{Krivitsky2023}.  
See \citet{Stewart2019} for an in-depth analysis and discussion of transitivity in the context of social network analysis, 
as well as for a review of the edgewise shared partner distribution and relevant models and literature.  

\begin{definition}
\label{def:esp}
The {\it edgewise shared partner distribution} 
is defined to be a sequence of graph statistics $\bs(\bx)$  
given by 
\be
\label{eq:esp}
s_{k}(\bx)
\= \dsum_{\{i,j\} \subset \mN} \, 
x_{i,j} \, \one\left(  \dsum_{h \in \mN \setminus \{i,j\}} \, x_{i,h} \, x_{j,h} \,=\,k \right),
&& k \in \{0, 1, \ldots, N-2\}, 
\ee
where each $s_{k}(\bx)$ counts the number of connected pairs of nodes $\{i,j\} \subset \mN$ 
with exactly $k$ other mutual connections. 
For a given pair of nodes $\{i,j\} \subset \mN$, 
the nodes $h \in \mN \setminus \{i,j\}$ with both $x_{i,h} = 1$ and $x_{j,h} = 1$,
i.e.,
both $i$ and $j$ are mutually connected to $h$ in $\bx$,   
are called the {\it shared partners} of $i$ and $j$.   
\end{definition}

\hide{
In words,
each summand (for a given $k \in \{0, 1, \ldots, N-2\}$) 
in \eqref{eq:esp} is an indicator random variable indicating whether 
\ben
\item The edge $x_{i,j}$ between nodes $i \in \mN$ and $j \in \mN$ is present in the network $\bx$, and 
\item Nodes $i \in \mN$ and $j \in \mN$ each have precisely $k \in \{0, 1, \ldots, N-2\}$ 
connections to common nodes $h \in \mN \setminus \{i,j\}$,
which are called the {\it shared partners} due to the fact that nodes $i \in \mN$ and $j \in \mN$ 
share common connections to these $k \in \{0, 1, \ldots, N-2\}$ nodes $h \in \mN \setminus \{i,j\}$. 
\een}

For this example, 
we define 
define $\emp$ in \eqref{eq:emp} to be 
\be
\label{esp_dist}
\empk
\= \dfrac{s_{k}(\bX)}{\norm{\bX}_1}, 
&& k \in \{0, 1, \ldots, N-2\},  
\ee
where 
$s_{k}(\bX)$ ($k \in \{0, 1, \ldots, N-2\}$) 
are the edgewise shared partner statistics defined in Definition \ref{def:esp} and 
where $\norm{\bX}_1 = \sum_{\{i,j\} \subset \mN} \, X_{i,j}$ 
is the edge count of the network $\bX$,
noting that the sum in \eqref{eq:esp} is a sum of $\norm{\bX}_1$ terms. 
Distinct from the previous example which was the degree distribution, 
the value of $M$ (which in this example will be $\norm{\bX}_1$) 
is not a deterministic constant, 
but is in fact a random quantity. 
In order to overcome this, 
we will utilize Theorem \ref{thm:main2} and incorporate into the high-probability set $\mbX_0$ 
a condition which allows us to bound $\norm{\bX}_1$,
effectively bounding $M$ in the worst case for a given assumption on the density of the network.  

We control the quantity $\dep(\mbX_0)$ in similar fashion to Corollary \ref{cor:1}.
Let 
$\mE \coloneqq \big\{\{i,j\} : (i,j) \in \mN \times \mN \mbox{ with } i < j \big\}$
be the set of all unordered pairs of nodes in the network $\bX$.  
Define a bijective map $\xi : \{1, \ldots, \binom{N}{2}\} \mapsto \mE$,
which essentially constructs an arbitrary ordering 
of the pairs of edges which are enumerated in $\mE$, 
and define $\bB_k \coloneqq (B_{k,1}, \ldots, B_{k,M})$ ($k \in \{0, 1, \ldots, N-2\}$) 
by defining,
for each $q \in \{1, \ldots, \binom{N}{2}\}$ with $\{i,j\} = \xi(q)$, 
\beno
B_{k,q} 
&\coloneqq&  
\one\left( \, \dsum_{h \in \mN \setminus \{i,j\}} \, X_{i,h} \, X_{j,h} \,=\, k \right),
&& k \in \{0, 1, \ldots, N-2\}. 
\ee
In words, 
for each $q \in \{1, \ldots, \binom{N}{2}\}$ with $\{i,j\} = \xi(q)$,
$B_{k,q} = 1$ if and only if nodes $i$ and $j$ have precisely $k$ shared partners. 
Assume that there exist a subset $\mbX_0 \subseteq \mbX$ and constant $N_0 \geq 1$ with the property that 
\be
\label{eq:cor2_assumption_a}
\mbP(\bX \in \mbX_0)
&\geq& 1 - \dfrac{2}{N^2},
&& N \geq N_0,  
\ee
and,  
for each $m \in \{1, \ldots, \binom{N}{2}\}$, 
there exist a subset $\mM_m \subset \{1, \ldots, \binom{N}{2}\}$ 
and non-negative constants $\alpha_{m,q} \in [0, \infty)$ 
($q \in \{1, \ldots, \binom{N}{2}\} \setminus (\mM_m \cup \{m\})$) 
such that,
for each $q \in \{1, \ldots, \binom{N}{2}\} \setminus (\mM_m \cup \{m\})$,
we have the bounds  
\be
\label{eq:cor2_assumption_b}
 \max\limits_{(\bb,\bb^\prime) \in \mbB_{k,i}(\mbX_0) \times \mbB_{k,i}(\mbX_0) \,:\, b_t = b_t^\prime, \, t < m} 
\;\; 
d_{\tv}\left(\mbP_{m,q,k}^{\bb}, \; \mbP_{m,q,k}^{\bb^\prime} \right)
&\leq& \alpha_{m,q}, 
&& k \in \{0, 1, \ldots, N-2\},  
\ee
where 
$\mbB_{k,i}(\mbX_0)$ is the subset of $\bb \in \{0,1\}^{i}$
($i \in \{1, \ldots, M\}$)
for which there exists $\bx \in \mbX_0$ for which $B_{k,t} = b_t$ ($t \leq i$).
As a result of this assumption,  
for each $m \in \{1, \ldots, \binom{N}{2}\}$ and each $k \in \{0, 1, \ldots, N-2\}$, 
\beno
1 + \dsum_{q \in \left\{1, \ldots, \binom{N}{2}\right\} \setminus \{m\}} \, \delta_{m,q,k}(\mbX_0)
&\leq& 1 + M_m + \dsum_{q \in \left\{1, \ldots, \binom{N}{2}\right\} \setminus (\mM_m \,\cup\, \{m\})} \, \alpha_{m,q}, 
\ee
defining $M_m \coloneqq |\mM_m|$ ($m \in \{1, \ldots, \binom{N}{2}\}$) and 
noting that $d_{\tv}\left(\mbP_{m,q,k}^{\bb}, \; \mbP_{m,q,k}^{\bb^\prime} \right) \leq 1$,
which in turn implies that 
\be
\label{eq:cor2_dep_bound}
\dep(\mbX_0) 
&\coloneqq& \max\limits_{k \in \{0, 1, \ldots, N-2\}} \, \left[ 
\dfrac{1}{M} \,
\dsum_{m=1}^{M} \, \left( 1 + \dsum_{q \in  \left\{m+1, \ldots, \binom{N}{2}\right\}} \, 
\delta_{m,q,k}(\mbX_0) \right)^2 \right]
&\leq& (1 + M_{\max} + \alpha_{\max})^2,
\ee
defining $M_{\max} \coloneqq  \{M_1, \ldots, M_{\binom{N}{2}}\}$
and 
$\alpha_{\max} \coloneqq \max_{m \in \left\{1, \ldots, \binom{N}{2}\right\}} \, 
\sum_{q \in \left\{1, \ldots, \binom{N}{2}\right\} \setminus (\mM_m \,\cup\, \{m\})} \,  \alpha_{m,q}$, 
similarly as in Corollary \ref{cor:1}.

\begin{corollary}
\label{cor:2}
Under the assumptions of Theorem \ref{thm:main2}
with 
$s_k(\bx)$ as given in \eqref{eq:esp} 
and assuming both \eqref{eq:cor2_assumption_a} and \eqref{eq:cor2_assumption_b} and 
\be
\label{deg_control}
\mbP\left( \bX \in \left\{\bx \in \mbX \,:\, \norm{\bx}_1 \geq N^{\beta} \right\} \;\cap\; \mbX_0 \right)
&\geq& 1 - \dfrac{2}{N^2}, 
\ee
for some $\beta > 0$ and $\mbX_0$ given by \eqref{eq:cor2_assumption_a} and \eqref{eq:cor2_assumption_b}, 
there exists a constant $N_0 \geq 3$ such that 
\beno
\mbP\left( \norm{\emp - \truth}_{\infty} \,<\,
(1 + M_{\max} + \alpha_{\max}) \,
\sqrt{\dfrac{3}{2}} \,
\sqrt{\dfrac{\log(N)}{N^{\beta}}} \right)
&\geq& 1 - \dfrac{11}{N^2},
&& N \geq N_0. 
\ee 
Assuming $M_{\max} + \alpha_{\max} = o\left(\hspace{-.1cm}\sqrt{N^{\beta} / \log(N)} \hspace{.05cm}\right)$,
the error $\norm{\emp - \truth}_{\infty}$ converges almost surely to $0$ as $N \to \infty$.   
\end{corollary}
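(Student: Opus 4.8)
The plan is to derive the bound as a specialization of Theorem~\ref{thm:main2} to the edgewise shared partner setup, with the one genuinely new feature being that the normalizing count $M = \norm{\bX}_1$ is random rather than the deterministic $M = N$ of Corollary~\ref{cor:1}. First I would record the structural parameters: since the shared partner count ranges over $k \in \{0,1,\ldots,N-2\}$ we have $1+p = N-1$, and the empirical distribution in \eqref{esp_dist} normalizes by the edge count $M = \norm{\bX}_1$. The dependence functional is already controlled: assumptions \eqref{eq:cor2_assumption_a} and \eqref{eq:cor2_assumption_b} yield, through the chain culminating in \eqref{eq:cor2_dep_bound}, the uniform estimate $\dep(\mbX_0) \leq (1 + M_{\max} + \alpha_{\max})^2$, so that $\sqrt{\dep(\mbX_0)} \leq 1 + M_{\max} + \alpha_{\max}$ enters as the leading factor of the final threshold exactly as in Corollary~\ref{cor:1}.

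The core of the argument is to convert the random normalization into a deterministic rate. I would work on the event $\{\norm{\bX}_1 \geq N^{\beta}\} \cap \mbX_0$, which by the degree-control hypothesis \eqref{deg_control} has probability at least $1 - 2/N^2$ once $N$ is large. On this event $M = \norm{\bX}_1 \geq N^{\beta}$, so $1/M \leq N^{-\beta}$; simultaneously $M \leq \binom{N}{2} < N^2$ and $1+p = N-1 < N^2$ give $\log(\max\{M, 1+p\}) \leq 2\log N = O(\log N)$. Substituting the $\dep(\mbX_0)$ bound and these two estimates and then tracking constants exactly as in the proof of Corollary~\ref{cor:1} replaces the random quantity $\sqrt{\dep(\mbX_0)\,\log(\max\{M,1+p\})/M}$ by a deterministic multiple of $\sqrt{\log(N)/N^{\beta}}$, producing the stated threshold $(1 + M_{\max} + \alpha_{\max})\sqrt{3/2}\,\sqrt{\log(N)/N^{\beta}}$.

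The hardest step, and the point requiring real care, is precisely that $M = \norm{\bX}_1$ appears simultaneously inside the probability and inside the deviation threshold of Theorem~\ref{thm:main2}, so one cannot simply substitute a fixed value. I would resolve this by conditioning on $\{\norm{\bX}_1 = m\}$ for each admissible $m \geq N^{\beta}$, applying the concentration bound of Lemma~\ref{lem:concentration} (equivalently the argument underlying Theorem~\ref{thm:main2}) at the fixed value $m$, and then using that $m \mapsto \sqrt{\log(\max\{m, N-1\})/m}$ is decreasing for $m \geq N^{\beta}$ once $N$ is large to bound every conditional deviation threshold by the single deterministic rate above; recombining the conditional bounds over $\{\norm{\bX}_1 \geq N^{\beta}\}$ then closes the estimate. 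Additional care is needed because the reference distribution $\truth = \mbE\,\empnox$ is defined unconditionally whereas the conditioning fixes $m$, so the conditional and unconditional means of $\empnox$ differ; I would control this discrepancy on the high-probability event and absorb it into the probability budget, which combines $2/N^2$ from $\mbX_0^c$ via \eqref{eq:cor2_assumption_a}, $2/N^2$ from the degree-control event \eqref{deg_control}, and the remaining $O(1/N^2)$ deviation and conditioning terms, arranged to total $11/N^2$.

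For the almost sure statement I would observe that the hypothesis $M_{\max} + \alpha_{\max} = o(\sqrt{N^{\beta}/\log N})$ forces the threshold $(1 + M_{\max} + \alpha_{\max})\sqrt{3/2}\,\sqrt{\log(N)/N^{\beta}} \to 0$, while the failure probabilities satisfy $\sum_N 11/N^2 < \infty$. Applying the Borel--Cantelli lemma to the complementary events then yields that only finitely many of them occur with probability one, so $\norm{\emp - \truth}_{\infty} \to 0$ almost surely as $N \to \infty$.
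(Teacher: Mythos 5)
Your proposal follows the same route as the paper's proof: invoke Theorem \ref{thm:main2}, insert the bound $\dep(\mbX_0)\le(1+M_{\max}+\alpha_{\max})^2$ from \eqref{eq:cor2_dep_bound}, use \eqref{deg_control} to restrict to the event $\{\norm{\bX}_1\ge N^{\beta}\}\cap\mbX_0$ so that $1/M\le N^{-\beta}$, tally the failure probabilities to $11/N^2$, and finish with Borel--Cantelli; all of that matches. Where you differ is in the treatment of the random normalizer $M=\norm{\bX}_1$: the paper does not condition on $\{\norm{\bX}_1=m\}$ at all --- it simply substitutes $M\ge N^{\beta}$ and $p=N-2$ into the conclusion of Theorem \ref{thm:main2} after restricting to the high-probability event. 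Your instinct that this substitution deserves justification is sound (the martingale decomposition behind Lemma \ref{lem:concentration} is carried out for a fixed index set of size $M$), and your conditioning-plus-monotonicity scheme is a reasonable way to supply it, but two details in your sketch need repair. First, your estimate $\log(\max\{M,1+p\})\le 2\log N$ delivers the constant $\sqrt{3}$, not the stated $\sqrt{3/2}$; the paper obtains $\sqrt{3/2}$ via $\log(\max\{M,1+p\})\le\log(N-1)\le\log(N)$, and reconciling either route with the fact that $M$ may exceed $N-1$ requires a monotonicity argument on $m\mapsto\log(m)/m$ rather than a crude $2\log N$ bound. Second, the discrepancy between $\mbE[\empnox\mid\norm{\bX}_1=m]$ and the unconditional $\truth$ is a bias term that must be added to the deviation threshold (exactly as the $2\,r(N)$ term is handled in the proof of Theorem \ref{thm:main2} via Lemma \ref{lem:AB_bound}), not ``absorbed into the probability budget''; also, your budget double-counts $\mbX_0^c$, since \eqref{deg_control} already controls the intersection event. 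None of this changes the architecture of the argument, and the almost-sure conclusion via Borel--Cantelli is exactly as in the paper.
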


A remark is in order regarding the role of $\beta > 0$ in Corollary \ref{cor:2}. 
As mentioned, 
the value of $M$ in this example is not a deterministic quantity, 
meaning that the denominator of the proportions $\emp$ will be random as a proportion 
of the subset of edges present in the graph.  
To overcome this, 
we incorporate a minimum density assumption through the condition in \eqref{deg_control},
which allows us to bound the value of $\norm{\bX}_1$ with probability $1$ 
under the conditioning event of \eqref{deg_control}.  
The minimal scaling of the density of the network through the specification of $\beta > 0$
then effectively determines the value of $M$ (in the worst case), 
with lower values of $\beta > 0$ resulting in slower rates of convergence.  

\subsection{Discussion of the weak dependence conditions} 
\label{sec:weak_dep}

The main assumption of this work lies in the reasonableness of the assumption
that the quantities $\mcC_N$, $\Delta_N$, and $\dep$,
which were defined in Definitions \ref{def1}, \ref{def2}, and \ref{def3},
respectively, 
will be bounded or grow slowly relative to the quantity $M$ in our theory.  
We argue that this assumption will be reasonable in many applications, 
especially those in the social and life sciences,
and discuss why our conditions are relevant to many real world applications.

For the sake of argument, 
consider a social network in the form of a friendship network and consider two randomly selected individuals on two different 
continents. 
We might ask the question:
If person A makes a new friend, how can we expect that to influence the friendships of person B?
In most sociological settings, 
is reasonable to assume that individuals are able to influence only their local neighborhood, 
giving rise to potentially strong local dependence,
but weak global dependence. 
Recent works in the statistical network analysis literature have demonstrated the importance
of localized dependence in network data applications on both statistical grounds and scientific grounds
\citep{Schweinberger2015,Stewart2019,SchweinbergerStewart2020,Stewart2020}.
These works were inspired by the relevance of this type of assumption to real world network data applications.
It is from these considerations that 
we argue that the weak dependence assumptions of this work are highly applicable to real world networks.

These ideas are central to the definitions of $\mcC_N$, $\Delta_N$, and $\dep$ in Definitions \ref{def1}, \ref{def2}, and \ref{def3},
and motivated the construction of the bounds on $\dep$  presented in Corollaries \ref{cor:1} and \ref{cor:2},
where we allowed for a small (relative to the size of the network) neighborhood of arbitrarily strong dependence, 
but placed a weak dependence assumption on the quantities outside of that neighborhood;  
these were controlled by the values of $M_{\max}$ and $\alpha_{\max}$ in Corollaries \ref{cor:1} and \ref{cor:2}. 
With regards to $\mcC_N$,
which quantified the dependence among the events of interest through covariances,
this condition requires the correlation between events to not be too large,
whereas $\Delta_N$ requires that the marginal distributions of the event statistics $B_{0,i}, B_{1,i}, \ldots, B_{p,i}$
not be too sensitive to conditional events for other event statistics $B_{0,j}, B_{1,j}, \ldots, B_{p,j}$. 
With respect to the running example of the degree distribution, 
this condition restricts  the extent to which the degree of node $i \in \mN$ 
can change the marginal distributions of the degrees of nodes $j \in \mN \setminus \{i\}$,
as measured by the total variation distance between the marginal distribution and the conditional distribution,
as presented in Proposition \ref{prop:1}.
This is analogous to the discussion above concerning local versus global dependence, 
where an individual node may be able to influence a small (relative to the entire network) number of nodes,
but is unable or unlikely to be able to exert strong global influence over the network.

\section{Empirical results\label{sec:3}}

\subsection{Simulation study 1: curved exponential-family random graph models} 

The first simulation study we conduct explores curved exponential parameterizations 
of exponential-family random graph models. 
Curved exponential parameterizations for exponential-family random graph models 
date back to \citet{Snijders2006} and \citet{Hunter2007},
and have since been shown to remediate issues with early attempts at constructing models of edge dependent 
(e.g., model degeneracy) \citep{Hunter2006,Stewart2019,SchweinbergerStewart2020}. 
The most prominent example of curved exponential parameterizations in the literature 
includes geometrically-weighted model terms, 
which parsimoniously parameterize sequences of graph statistics, 
typically edge-wise shared partner distributions or degree distributions. 

In this simulation study, 
we demonstrate the results of Theorems \ref{thm:main1} and \ref{thm:main2} 
in the context of a curved exponential-family random graph model which includes two model terms: 
the edge count statistic and the geometrically-weighted edgewise shared partner statistics. 
We can write down the joint distribution for $\bX$ in this simulation study as 
\be
\label{eq:sim1}
\mbP(\bX = \bx) 
&\propto& \exp\left( \theta_1 \dsum_{\{i,j\} \subset \mN} \, x_{i,j} 
+ \dsum_{k=1}^{N-2} \, \eta_{k}(\theta_2, \theta_3) \, \dsum_{\{i,j\} \subset \mN} x_{i,j} \, 
\one\left(\dsum_{h \in \mN \setminus \{i,j\}} \, x_{i,h} \, x_{j,h} \,=\, k \right) \right), 
\ee 
where $(\theta_1, \theta_2, \theta_3) \in \mbR \times \mbR \times [0, \infty)$ and
$\eta_{k}(\theta_2, \theta_3) \coloneqq \theta_2 \, \exp(\theta_3) \,[ 1 - (1 - \exp(\theta_3))^k]$
($k \in \{1, \ldots, N-2\}$),
inducing dependence among edges,
as the distribution will not factorize with respect to the edge variables when $\theta_2 \neq 0$. 

\begin{figure}[t]
\centering 
\includegraphics[scale = .6]{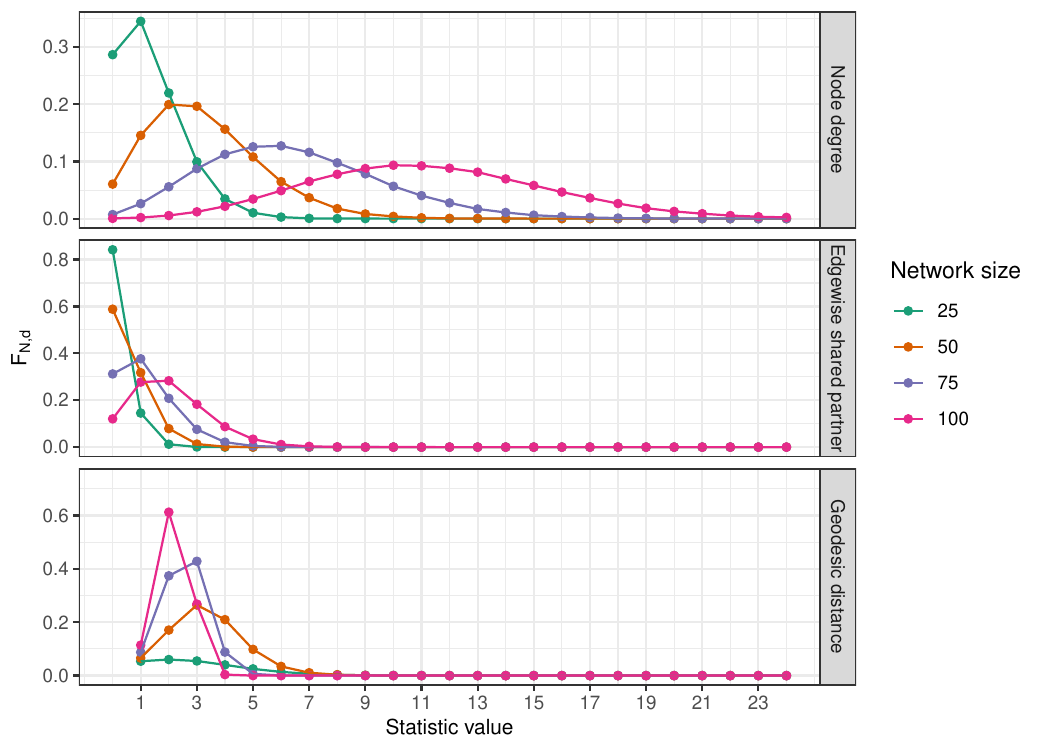} %
\caption{\label{fig:sim1-1} Results of simulation study 1. 
Estimated theoretical marginal distributions $\truth$ for the  
degree distribution, 
edgewise shared partner distribution,  
and geodesic distance distribution of networks of size 
$N \in \{25, 50, 75, 100\}$.}
\end{figure}

\begin{figure}[t]
\centering
\includegraphics[width = .75 \linewidth]{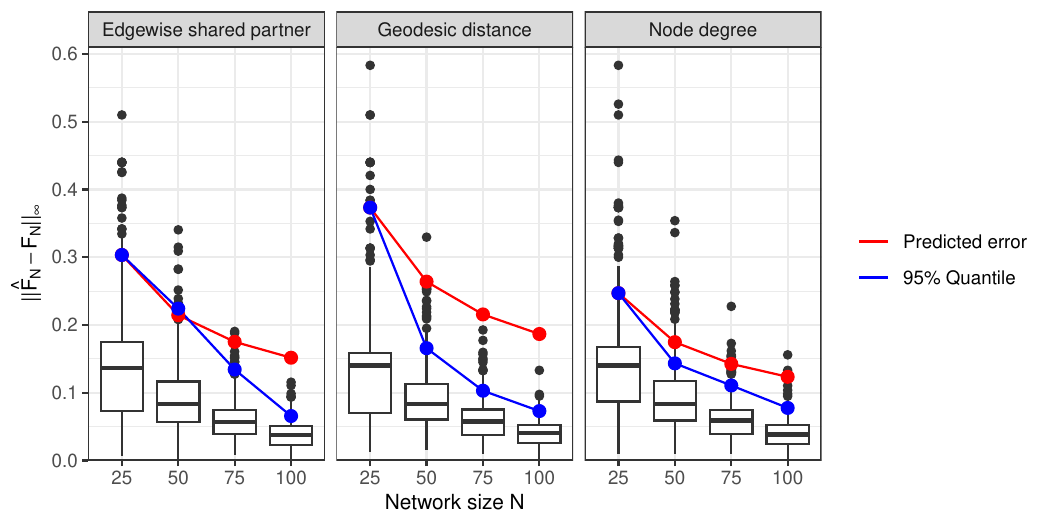}
\caption{\label{fig:sim1-2} Results of simulation study 1.
Boxplots summarizing the error $\norm{\emp - \truth}_{\infty}$ of 
of the degree distribution,  
edgewise shared partner distribution,  
and geodesic distance distribution 
of networks of size 
$N \in \{25, 50, 75, 100\}$ 
based on $500$ replications.
Rates of convergence for the $95\%$ quantile are predicted using Theorem \ref{thm:main1} and are indicated by the red line, 
compared with the actual $95\%$ quantile of the simulated errors.} 
\end{figure}

We focus on three different sequences of graph statistics,
including  the degree and edgewise shared partner distributions,
studied in Corollaries \ref{cor:1} and \ref{cor:2},
respectively,
as well as the geodesic distance distribution,
defined based on $(s_1(\bx), \ldots, s_{N-1}(\bx))$,  
where $s_k(\bx)$ is the number of pairs of nodes
with graph distance  equal to $k$.
All three 
are default diagnostic statistics 
in the goodness-of-fit method 
in the {\tt R} package {\tt ergm},
based on the work of \citet{Hunter2008}.

Simulation study 1 is conducted under the following conditions: 
\ben
\item The parameter vector is set to $(\theta_1, \theta_2, \theta_3) = (-3.5, .4, .75)$. 
\item The number of nodes varies from $N \in \{25, 50, 75, 100\}$. 
\item Networks $\bX$ are simulated using Markov-Chain Monte Carlo approaches \cite[see, e.g.,][]{Hunter2006}
in order to provide accurate approximations of $\truth$ and to generate network data sets for the simulation study.  
\item For each case of $N \in \{25, 50, 75, 100\}$, 
we generate $500$ replications and compute the empirical degree distribution $\emp$ and 
the error $\norm{\emp - \truth}_{\infty}$ using the approximation of $\truth$ outlined above. 
\een 
The results of Simulation study 1 are summarized in 
Figs. \ref{fig:sim1-1}
and \ref{fig:sim1-2}. 
We highlight two key elements of these results. 
First, 
the theoretical marginal distributions $\truth$ for each of the sequences of graph statistics 
considered in this simulation study are not constant in the network size $N$, 
as demonstrated by Fig. \ref{fig:sim1-1}. 
Each marginal distribution $\truth$ for $N \in \{25, 50, 75, 100\}$ 
was approximated using $2500$ sampled networks using the approach described above 
and to a maximum estimated standard error of under $.01$ 
and a total sum of estimated standard errors under $.01$. 
Second, 
the results of Fig. \ref{fig:sim1-2} 
demonstrate that the error 
$\norm{\emp - \truth}_{\infty}$
appropriately decays with high probability as a function of the network size $N$,
suggesting that the theoretical results of this work 
may be realized even in settings where networks are only modestly size,
noting that $N \leq 100$ in this study.

\subsection{Simulation study 2: sparse and dense $\beta$-models} 

The second simulation study we conduct focuses on the $\beta$-model 
\citep{Chatterjee2011}, 
which is related to the $p1$-model of \citet{Holland1981},
and posits a simple statistical model for degree heterogeneity in undirected random graphs. 
We will explore the $\beta$-model in the context of this work in both the dense and sparse graph regimes, 
where notably sparse variations of the $\beta$-model have garnered recent attention 
\citep[e.g,][]{Chen2021, Stewart2020}.
The $\beta$-model may be written down as follows:
\be
\label{beta1}
\mbP(\bX = \bx)
\= \dprod_{\{i,j\} \subset \mN} \, \mbP(X_{i,j} = x_{i,j}),
\ee
where 
\be
\label{beta2}
\log \dfrac{\mbP(X_{i,j} = 1)}{\mbP(X_{i,j} = 0)}
\= \theta_i + \theta_j, 
&& (\theta_i, \theta_j) \in \mbR^2, & \{i,j\} \subset \mN. 
\ee
The $\beta$-model has a straightforward interpretation. 
The log-odds of an edge in the network is equal to the sum $\theta_i + \theta_j$, 
where each node $i \in \mN$ is endowed with a parameters $\theta_i \in \mbR$. 
These parameters are interpreted as sociality parameters, 
where 
nodes $i \in \mN$ with larger $\theta_i \in \mbR$ will have a larger average node degree 
when compared with other nodes $j \in \mN$ for which $\theta_j < \theta_i$. 
As a result, 
the expected degrees of nodes under the $\beta$-model may exhibit significant heterogeneity. 

\begin{figure}[t]
\centering
\includegraphics[scale = .6]{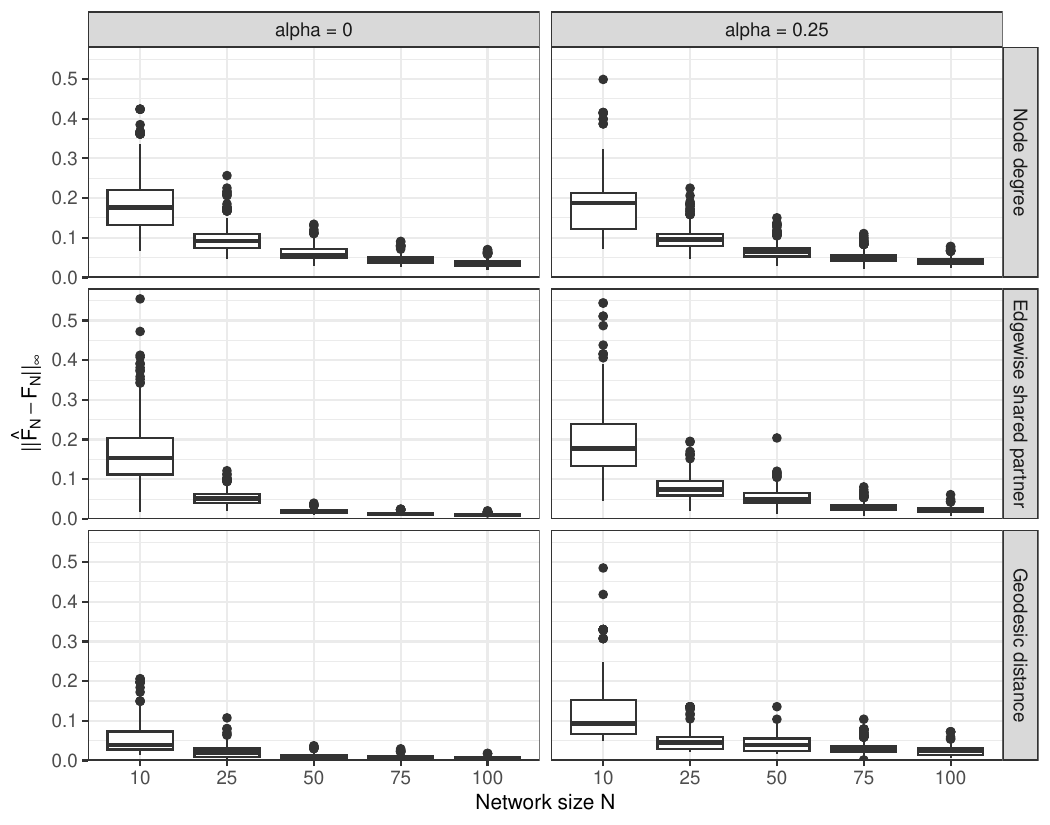} %
\caption{\label{fig:sim2} Results of simulation study 2.
Boxplots summarizing the error $\norm{\emp - \truth}_{\infty}$ of
of the degree distribution,
edgewise shared partner distribution,
and geodesic distance distribution
of networks of size
$N \in \{10, 25, 50, 75, 100\}$
based on $500$ replications.}
\end{figure}

Simulation study 2 is conducted under the following conditions:
\ben
\item The parameters in the vector $\btheta \in \mbR^N$ are independently simulated according to 
a normal distribution where $\theta_i \sim \text{N}(-\alpha \log N, \, 1)$
for all $i \in \mN$ and varying $\alpha \in \{0, .25\}$. 
\item The forms of \eqref{beta1} and \eqref{beta2} 
make it straightforward to simulate networks for approximating 
$\truth$ and simulating replicates in each simulation. 
\item For each case of $N \in \{10, 25, 50, 75, 100\}$ and $\alpha \in \{0, .25\}$,  
we generate $500$ replications and compute the empirical degree distribution $\emp$ and
the error $\norm{\emp - \truth}_{\infty}$ using the approximation of $\truth$.  
\een
Letting $\theta_i = \tilde\theta_i - \alpha \, \log(N)$ (for a fixed value of $\tilde\theta_i$ in a bounded set),
the value of $\alpha$ can be interpreted as follows: 
\beno
\log \, \dfrac{\mbP(X_{i,j} = 1)}{\mbP(X_{i,j} = 0)}
\= \tilde\theta_i + \tilde\theta_j - 2 \, \alpha \, \log(N), 
\ee
which implies that the expected degrees of nodes will grow slower than $O(N)$ when $\alpha > 0$ as  
\be
\label{eq:expected_degree}
\max\limits_{i \in \mN} \, \mbE \, \dsum_{j \in \mN \setminus \{i\}} \, X_{i,j} 
&\leq& C \; N^{1 - 2 \, \alpha},
&&\mbox{for some } C > 0. 
\ee
As a result, 
for $\alpha = 0$,
our simulation study is conducted in the dense graph regime where the expected number of edges in 
the network grows at a rate of $O(N^2)$,
whereas for $\alpha = .25$, 
the upper bound in \eqref{eq:expected_degree} demonstrates the expected degrees of nodes will 
satisfy $o(N)$,
implying the expected number of edges in the network will be $o(N^2)$.

The results of simulation study 2 are summarized in Fig. \ref{fig:sim2}. 
This simulation study highlights a key point that the theoretical results of this work 
are not affected by significant heterogeneity in node behavior, 
namely the node degrees. 
Theorems \ref{thm:main1} and \ref{thm:main2} reveal that the main factor which influences 
the error $\norm{\emp - \truth}_{\infty}$ is the dependence among the events of interest.  
Notably,
the $\beta$-model is within the scope of Corollary \ref{cor:bern}.
The results presented in Fig.  \ref{fig:sim2} show that 
the error $\norm{\emp - \truth}_{\infty}$ decays with the size of the network. 
The value of $\alpha$ appears to only influence the error associated with 
the empirical distributions of the edgewise shared partner and geodesic distance distributions,
given by the differences in the rates of decay between the first column and second column in Fig. \ref{fig:sim2},
with the case of $\alpha = 0$ presenting a faster decaying error $\norm{\emp - \truth}_{\infty}$ with the network size $N$.

\section{Application\label{sec:application}}

\begin{figure}[t]
\centering
\includegraphics[scale = .55]{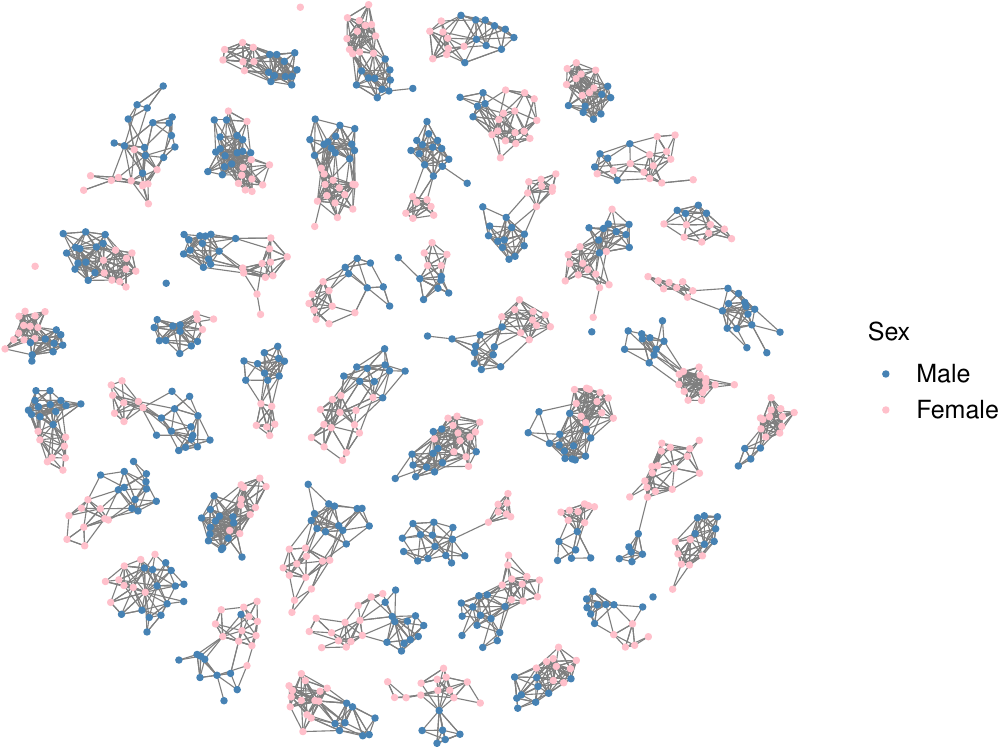} %
\caption{\label{fig:app_net}
A visualization of $44$ of the $304$ school classroom friendship networks in the school classes data set.}
\end{figure}

\begin{figure}[t]
\centering
\includegraphics[scale = .6]{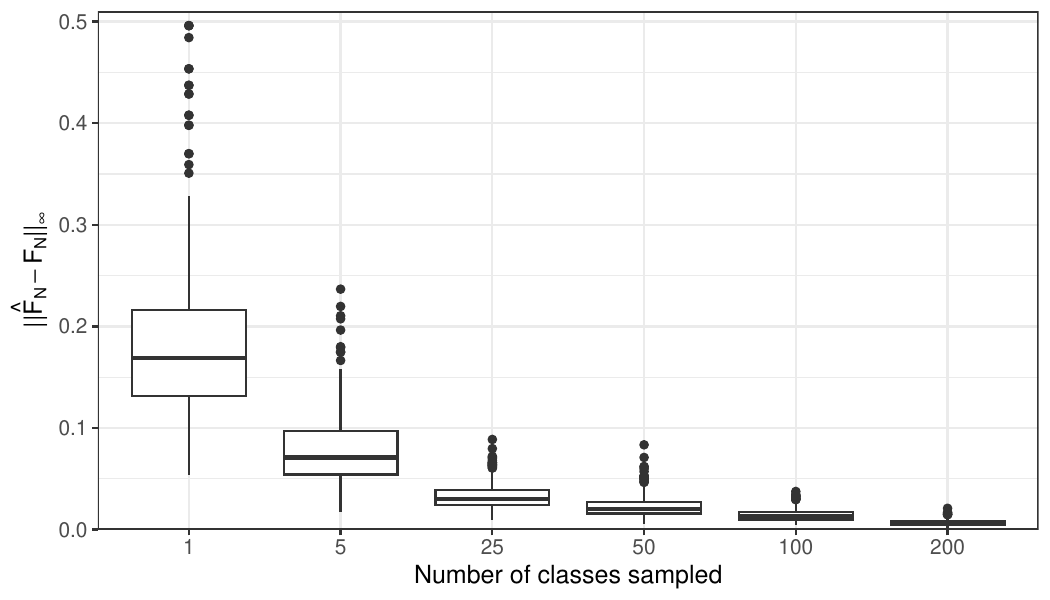}
\caption{\label{fig:app-1} 
Boxplots summarizing the error $\norm{\emp - \truth}_{\infty}$ of
of the empirical out-degree distribution 
based on random samples without replacement of size $K \in \{1, 5, 25, 50, 100, 200\}$ 
of the $304$ school classroom networks in the school classes data set.} 
\end{figure}

\begin{figure}[t]
\centering
\includegraphics[scale = .65]{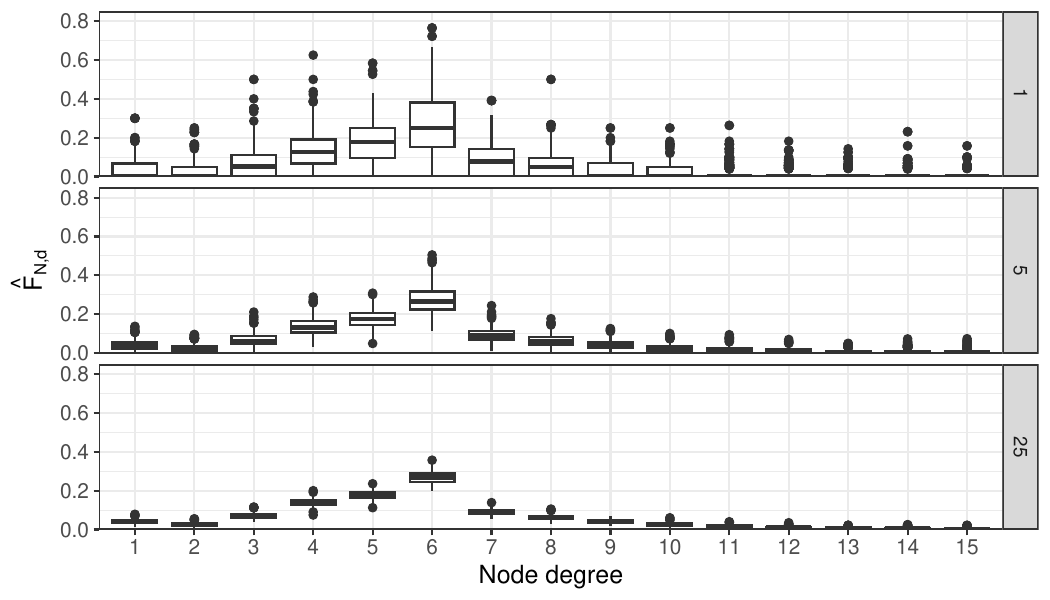}
\caption{\label{fig:app-2} 
Boxplots demonstrating the variability of the empirical out-degree distribution $\emp$ 
based on random samples without replacement of size $1$, $5$, and $25$ 
of the within-block subgraphs $\bX_{k,k}$ ($1 \leq k \leq 304$) 
based on  the $304$ school classroom networks in the school classes data set.} 
\end{figure}

We conclude with 
an application to a school classes network data set studied by  \citet{Stewart2019}.
The network data is a friendship network 
which consists of $N = 6594$ third grade students 
which are uniquely associated to one of $304$ classes 
within $176$ schools.
The network is directed as the data collection mechanism was a name generator 
which was ``Name people from your class that you would most like to play with,"
as well as information on the sex of students (recorded as male or female).  
A visualization of part of the network is given in Fig. \ref{fig:app_net}. 

The sampling mechanism for this data set includes only within-class edges,
which can be seen from the visualization of the network presented in Fig. \ref{fig:app_net}.
Due to the nature of the sampling design and the structure of the network, 
it is natural to assume a local dependence structure of the network 
\citep{Schweinberger2015,SchweinbergerStewart2020,stewart2024rates}, 
which was the approach taken in previous work studying this data set \citep{Stewart2019}. 
Local dependence random graphs can be thought of as generalizations of stochastic block models \citep{Holland1983}, 
where the set of nodes $\mN$ 
is partitioned into blocks or subpopulations $\mA_1, \ldots, \mA_K$ ($K \geq 2$). 
The term local dependence is due to the fact that joint distributions for local dependence random graphs are 
assumed to factorize with respect to the block-based subgraphs: 
\be
\label{eq:loc_dep}
\mbP(\bX = \bx)
\= \dprod_{1 \leq k \leq l \leq K} \, \mbP_{k,l}(\bX_{k,l} = \bx_{k,l}),
\ee
where $\bX_{k,l}$ is the vector of edge variables between nodes in block $\mA_k$ and $\mA_l$
and $\mbP_{k,l}$ is the marginal probability distribution of the subgraph $\bX_{k,l}$.  
As the form of \eqref{eq:loc_dep} suggests, 
edge variables within the same block-based subgraph $\bX_{k,l}$ ($1 \leq k \leq l \leq K$) 
are allowed to be dependent, 
but edges in different block-based subgraphs are assumed to be independent, 
hence the name of local dependence random graphs.
The relevance to this work lies in the fact that 
events of interest in \eqref{eq:stat} which are defined around the block-based subgraphs
will satisfy the main assumption of this work---the weak dependence
conditions outlined in Section \ref{sec:concentration} and discussed in Section \ref{sec:weak_dep}---will be satisfied,
due to the structure of a local dependence random graph.  
As an example, 
consider the within-block out-degree of nodes: 
\be
\label{eq:wc_deg}
\textrm{deg}^{+}_{i}(\bx)
&\coloneqq& \dsum_{j \in \mA_{z_i} \setminus \{i\}} \, x_{i,j},
&& i \in \mN,
\ee 
where $z_i \in \{1, \ldots, K\}$ denotes the community membership of node $i \in \mN$, 
i.e.,
$z_i = k$ implies $i \in \mA_k$. 
By the local dependence property, 
two within-block out-degree statistics $\textrm{deg}^{+}_{i}(\bX)$ 
and $\textrm{deg}^{+}_{j}(\bX)$ will be independent whenever $z_i \neq z_j$,
as each will be a function of non-overlapping subsets of edge variables in the network $\bX$, 
each of which are independent of the other.  
As a result,
it is straightforward to demonstrate in this case that 
$\dep \leq \max\{|\mA_1|, \ldots, |\mA_K|\}$. 
In other words, 
as long as the sizes of the blocks are bounded above or do not grow too quickly with the network size $N$, 
Theorems \ref{thm:main1} and \ref{thm:main2} will establish consistency for empirical distributions 
of sequences of graph statistics defined around block-based subgraphs,
notably including degree statistics 
edgewise shared partner statistics  
of within-block quantities,  
by an extension of Corollaries \ref{cor:1} and \ref{cor:2}. 
In this application, 
we would take $M_{\max} = \max\{|\mA_1|, \ldots, |\mA_K|\}$ and $\alpha_{\max} = 0$, 
in order to compare with the results of Corollaries \ref{cor:1} and \ref{cor:2}.

\hide{
The data we are studying includes only information on the within-block subgraphs 
$\bX_{k,k}$ ($k \in \{1, \ldots, K\}$).
It is useful here that the form of \eqref{eq:loc_dep} also establishes two useful properties for our purposes here:
\ben
\item First, 
by the independence of the block-based subgraphs $\bX_{k,l}$ ($1 \leq k \leq l \leq K$), 
the marginal distribution of the within-block subgraphs
$\bX_{k,k}$ ($k \in \{1, \ldots, K\}$) 
will have a convenient form: 
\beno
\mbP((\bX_{1,1}, \ldots, \bX_{K,K}) = (\bx_{1,1}, \ldots, \bx_{K,K}))
\= \dprod_{k=1}^{K} \, \mbP(\bX_{k,k} = \bx_{k,k}),
\ee
which allows us to be able to focus on the within-block subgraphs independently of the 
unobserved between-block subgraphs.  
\item Second, 
local dependence random graphs satisfy a weak form of projectivity  
based on around the block-based subgraphs \citep{SchweinbergerStewart2020}, 
which facilitates exploration of rates of convergence 
in a real-world data set as the collection of within-block subgraphs 
$\bX_{1,1}, \ldots, \bX_{K,K}$ are independent. 
In other words, 
we are able to subsample the block-based subgraphs by exploiting the independence, 
which facilitates an exploration of rates of convergence by utilizing this specific sampling mechanism.  
\een
}

Local dependence random graphs satisfy a weak form of projectivity
based on around the block-based subgraphs \citep{SchweinbergerStewart2020},
which facilitates exploration of rates of convergence
in a real-world data set as the collection of within-block subgraphs
$\bX_{1,1}, \ldots, \bX_{K,K}$ are independent.
This allows us to subsample the block-based subgraphs by exploiting the independence,
which facilitates an exploration of rates of convergence by utilizing this specific sampling mechanism.

One unique challenge that this data set holds is that there is a substantial amount of missing data. 
The response rate to the questionnaire was highly variable across the school classes, 
where the median response rate of students was $87\%$, 
with $44$ classes with responses rates of $100\%$ \citep{Stewart2019}.   
For every student which responded to the questionnaire, 
the out-edges of that student are observed,
which means that we observe the within-class out-degrees as defined in \eqref{eq:wc_deg}
for each respondent to the survey. 
As a result, 
we can define the empirical within-class out-degree distribution based on the subset of students responding to the questionnaire,
circumventing the issue of missing data. 

We visualize the error $\norm{\emp - \truth}_{\infty}$
based on subsampling without replacement individual class networks in Fig. \ref{fig:app-1},
with the number of classes being sampled in each iteration of subsampling ranging from $1$ class up to $200$ classes.
As this is a real-data application, 
the true distribution is unknown. 
However, 
for local dependence random graph models,
the quantity  
$\dep$
is bounded above by the size of the largest block size (here $33$), 
as discussed above. 
Hence, 
the results of Corollary \ref{cor:1} can establish the consistency of the empirical within-class out-degree distribution.
Since the number of nodes in this application is $N = 6594$, 
we treat the empirical out-degree distribution based on the entire network 
as an accurate approximation of $\truth$ for the purposes of this study
and explore changes in the error $\norm{\emp - \truth}_{\infty}$ 
when $\emp$ is based on a subsample of the classes in the entire network,
the results of which are visualized in Fig. \ref{fig:app-1}. 
Notably, 
by the time even just $25$ or $50$ classes are subsampled,
the empirical within-class out-degree distribution is relatively stable, 
showing low variability.
We further explore the variability of the within-class out-degree distributions of this network 
in Fig. \ref{fig:app-2},
which visualizes the variability of at each degree in the out-degree distribution across different amounts of subsampling. 
From these two plots, 
we can see that the empirical out-degree distribution becomes relatively stable past when 
$25$ school classes are subsampled.

\section{Conclusions\label{sec:5}}

This work has established the first statistical conditions which help to provide sufficient conditions 
under which one can expect empirical distributions of sequences of graph statistics to be uniformly consistent, 
by establishing non-asymptotic bounds on the error $\norm{\emp - \truth}_{\infty}$ which hold with high probability. 
Our results notably cover many statistics and charts used in network science applications 
which aim to study networks across a large variety of different domains and fields of study,  
emphasizing the importance of the development of statistical foundations that help us understand the 
properties of these statistics and charts. 
Moreover, 
we have demonstrated via mathematical applications that the probability threshold of our main results 
is sufficient to establish strong consistency of  empirical distributions,
in the sense that $\norm{\emp - \truth}_{\infty}$ converges almost surely to $0$ as the network size $N \to \infty$
in many applications of interest.
In particular, 
the theory we developed in this work covers  
a broad class of random graphs which allow edges to be dependent, 
making our results widely applicable to many applications.

The key to our approach lies in specifying weak dependence conditions which facilitate the derivation of 
exponential inequalities for the tails of distributions of sequences of graph statistics. 
We expect that the weak dependence assumptions would be satisfied in many applications.  
One case in point includes the local dependence random graphs \citep{Schweinberger2015,SchweinbergerStewart2020}, 
which was utilized in our application to the school classes network data set in Section \ref{sec:application}.  
In this class of models, 
it is possible to demonstrate that $\dep$ 
is bounded above provided the sizes of the local dependence neighborhoods are bounded above. 
An intuitive description of this assumption is provided in Section \ref{sec:weak_dep},
where we argued that, 
in the context of sociological applications, 
it is reasonable to assume that individuals are able to strongly influence only their local neighborhood,
giving rise to potentially strong local dependence,
but  weak global dependence. 
As demonstrated through Corollaries \ref{cor:1} and \ref{cor:2}, 
edge variables and events of interest defining the sequences of graph statistics 
may be strongly dependent on a local level (i.e., small subsets), 
but should not depend too strongly on a global level. 
To reiterate, 
we argue that such an assumption would be expected to be satisfied in many social networks 
(as well as domains beyond social network analysis), 
where individuals or relationships may have a strong local influence in the position of the network, 
but are unlikely to exert a strong global influence on the structure of the network.  

Lastly, 
our simulation studies and application verify that the general theory we have developed in this work 
may be realized in settings of networks which are modestly sized, 
as well as networks for which there may be strong triadic dependence  and 
significant node heterogeneity. 
Interestingly, 
our theoretical results are independent of certain complexities of models or probability distributions, 
essentially only requiring that a weak dependence assumption is satisfied. 
Combined, 
the empirical and theoretical results of this work help to provide a first comprehensive analysis 
of the conditions under which inferences drawn from empirical distributions of sequences of graph statistics 
may be expected to be consistent and informative
for networks with dependent edges.

\section*{Acknowledgments}

Jonathan R. Stewart was supported by 
National Science Foundation grant SES-2345043
and the Test Resource Management Center (TRMC) within the Office of the Secretary of Defense (OSD) under contract \#FA807518D0002,
and is grateful to two anonymous reviewers whose suggestions led to significant improvements in this work.

\section*{Appendix: Proofs of theoretical results}

\begin{proof}[\textbf{\upshape Proof of Lemma \ref*{lem:concentration}:}]
There are two concentration inequalities which must be proved. 

{\it First concentration inequality.}
The first concentration inequality is derived via a martingale decomposition.  
Following the definition of $\empnox$ given in \eqref{eq:emp},
we may write down the following for each $k \in \{0, 1, \ldots, p\}$:
\beno
\empk - \truthk
\= \dsum_{m=1}^{M} \, 
\left( \mbE\left[ \empk \,|\, \mF_{k,m}\right] - \mbE\left[\empk \,|\, \mF_{k,m-1}\right] \right), 
\ee
where $\mF_{k,m} \coloneqq \sigma\left(\one(\mG_{k,1}), \ldots, \one(\mG_{k,m})\right)$ ($m \in \{1, \ldots, M\}$)
is the filtration of the process based on the Bernoulli random variables defined in \eqref{eq:stat},
i.e.,
$\mF_{m,k}$ is the $\sigma$-field generated by 
the Bernoulli random variables 
$\one(\mG_{k,1}), \ldots, \one(\mG_{k,m})$ for each $k \in \{0, 1, \ldots, p\}$ and $m \in \{1, \ldots, M\}$. 
Applying the Azuma-Hoeffding inequality (e.g., Corollary 2.20, \citep{Wainwright2019}),
\be
\label{eq:a-h}
\mbP\left( \left| \empk - \truthk \right| \,\geq\, t \right)
&\leq& 2 \, \exp\left( - \dfrac{2 \, t^2}{\norm{\bXi_k}_2^2} \right),
&& t > 0, \;\; k \in \{0, 1, \ldots, p\},
\ee
defining $\bXi_k \coloneqq (\Xi_{k,1}, \ldots, \Xi_{k,M})$ 
($k \in \{0, 1, \ldots, p\}$)
with the definition,
for each $m \in \{1, \ldots, M\}$,  
\beno
\Xi_{k,m} 
&\coloneqq& \inf \, \left\{a \in [0, \infty) \,:\, 
\left|\mbE\left[ \empk \,|\, \mF_{k,m}\right] - \mbE\left[\empk \,|\, \mF_{k,m-1}\right] \right| \,\leq\, a 
\;\;\mbox{ holds }\;  \mbP\mbox{-a.s} \right\}. 
\ee
In the case where $\norm{\bXi_k}_2 = 0$,
we have the trivial bound of 
\beno
\mbP\left( \left| \empk - \truthk \right| \,\geq\, t \right) 
\= 0,
&& t > 0.
\ee
As a result, 
we proceed without loss under the assumption that $\norm{\bXi_k}_2 > 0$ ($k \in \{0, 1, \ldots, p\}$). 
By \eqref{eq:stat} and \eqref{eq:emp}, 
\be
\label{eq:mart_decomp}
\mbE\left[ \empk \,|\, \mF_{k,m}\right] - \mbE\left[\empk \,|\, \mF_{k,m-1}\right]
\= \dfrac{1}{M} \, \dsum_{i=m}^{M} \, \big( \mbE\left[ \one(\mG_{k,i}) \,|\,  \mF_{k,m}\right] 
- \mbE\left[ \one(\mG_{k,i}) \,|\,  \mF_{k,m-1}\right] \big),
\ee 
noting that $\mbE\left[ \one(\mG_{k,i}) \,|\,  \mF_{k,m}\right] 
= \one(\mG_{k,i})
= \mbE\left[ \one(\mG_{k,i}) \,|\,  \mF_{k,m-1}\right]$ 
($\mbP$-a.s.) for all $i < m$. 
We next bound 
\be
\label{eq:ind_bound}
\left|\mbE\left[ \one(\mG_{k,i}) \,|\,  \mF_{k,m}\right]
- \mbE\left[ \one(\mG_{k,i}) \,|\,  \mF_{k,m-1}\right] 
\right|
&\leq& \sup\limits_{(a,b) \in \{0,1\} \times \{0,1\}} \, 
\left| \mbE\left[ \one(\mG_{k,i}) \,|\,  \widetilde\mF_{k,m}^{(a)}\right]
- \mbE\left[ \one(\mG_{k,i}) \,|\,  \widetilde\mF_{k,m}^{(b)} \right] \right| \s\\
\= \left| \mbE\left[ \one(\mG_{k,i}) \,|\,  \widetilde\mF_{k,m}^{(0)}\right]
- \mbE\left[ \one(\mG_{k,i}) \,|\,  \widetilde\mF_{k,m}^{(1)} \right] \right|, 
\ee
defining 
$\widetilde\mF_{k,m}^{(\zeta)} \coloneqq \sigma\left(\one(\mG_{k,1}), \ldots, \one(\mG_{k,m-1}), \zeta\right)$ 
for $\zeta \in \{0, 1\}$, 
$m \in \{1, \ldots, M\}$, 
and $k \in \{0, 1, \ldots, p\}$. 
In words,
$\widetilde\mF_{k,m}^{(\zeta)}$ is the sub-$\sigma$-field of $\mF_{k,m}$ generated by the random variables 
$\one(\mG_{k,1}), \ldots, \one(\mG_{k,m})$ such that 
$\mbP\left(\one(\mG_{k,m}) = \zeta \,|\, \widetilde\mF_{k,m}^{(\zeta)}\right) = 1$.
Revisiting \eqref{eq:mart_decomp},
we obtain through the triangle inequality and \eqref{eq:ind_bound} 
\be
\label{eq:sum_bound}
\left| \dfrac{1}{M} \, \dsum_{i=m}^{M} \, \big( \mbE\left[ \one(\mG_{k,i}) \,|\,  \mF_{k,m}\right]
- \mbE\left[ \one(\mG_{k,i}) \,|\,  \mF_{k,m-1}\right] \big) \right|
\;\;\leq\;\; \dfrac{1}{M} \,  \dsum_{i=m}^{M} \left| \mbE\left[ \one(\mG_{k,i}) \,|\,  \mF_{k,m}\right]
- \mbE\left[ \one(\mG_{k,i}) \,|\,  \mF_{k,m-1}\right] \right| \s\\
\quad\quad \leq\;\;  
\dfrac{1}{M} \, \dsum_{i=m}^{M} \, \left| \mbE\left[ \one(\mG_{k,i}) \,|\,  \widetilde\mF_{k,m}^{(0)}\right]
- \mbE\left[ \one(\mG_{k,i}) \,|\,  \widetilde\mF_{k,m}^{(1)} \right] \right| 
\;\;=\;\; \dfrac{1}{M} \, \dsum_{i=m}^{M} \, \left| \mbP\left(\mG_{k,i} \,|\,  \widetilde\mF_{k,m}^{(0)}\right)
- \mbP\left(\mG_{k,i} \,|\,  \widetilde\mF_{k,m}^{(1)} \right) \right| \s\\ 
\quad\quad \leq\;\; \dfrac{1}{M} \, \left( 1 + \dsum_{i=m+1}^{M} \vartheta_{m,i,k} \right),
\ee
defining,
for each $k \in \{0, 1, \ldots, p\}$ and $m \in \{1, \ldots, M\}$, 
\be
\label{eq:vthetas}
\vartheta_{m,i,k}
&\coloneqq& \dfrac{1}{2} \, \dsum_{v=0}^{1} \, \left| 
\mbP\left(\one(\mG_{k,i}) = v \,|\, \widetilde\mF_{k,m}^{(0)} \right)
- \mbP\left(\one(\mG_{k,i}) = v \,|\, \widetilde\mF_{k,m}^{(1)} \right) \right|,
&& i \in \{1, \ldots, M\}. 
\ee
In words, 
$\vartheta_{m,i,k}$ is the total variation distance between the conditional probability distributions 
of the Bernoulli random variable $\one(\mG_{k,i})$
under the conditioning sub-$\sigma$-fields $\widetilde\mF_{k,m}^{(0)}$ 
and $\widetilde\mF_{k,m}^{(1)}$. 
Observe that $\vartheta_{m,i,k} = 1$ when we have $i = m$,
and $\vartheta_{m,i,k} = 0$ for all $i < m$.  
Note that each $\vartheta_{m,i,k}$ in \eqref{eq:vthetas} is a random variable,
because the total variation distances involve conditional expectations which are random variables. 
In order to obtain a bound on the 
$\vartheta_{m,i,k}$ 
which holds with probability $1$,
we define,
for ease of presentation, 
the Bernoulli random variables 
$B_{k,i} \coloneqq  \one(\mG_{k,i})$,
$k \in \{0, 1, \ldots, p\}$, $i \in \{1, \ldots, M\}$,
and the conditional probability distribution of $B_{k,j}$
conditional on
$(B_{k,i}, \ldots, B_{k,i})$,
$j \in \{1, \ldots, M\}$, 
by
\beno
\mbP_{i,j,k}^{\bb}(v)
&\coloneqq& \mbP\left(B_{k,j} = v \;|\; (B_{k,1}, \ldots, B_{k,i}) = \bb\right),
& v \in \{0,1\}, \; \bb \in \{0,1\}^{i},
\;\;  (i,k) \in \{1, \ldots, M\} \times \{0, 1, \ldots, p\},
\ee
and define
\beno
\mdelta &\coloneqq&
\max\limits_{(\bb,\bb^\prime) \in \{0, 1\}^{i} \times \{0,1\}^{i} \,:\, b_t = b_t^\prime, \, t < i} \;
d_{\tv}\left(\mbP_{i,j,k}^{\bb}, \, \mbP_{i,j,k}^{\bb^\prime}\right)
&&  i \in \{1, \ldots, M\}, \; k \in \{0, 1, \ldots p\}. 
\ee
We can then bound each $\vartheta_{m,i,k}$ in \eqref{eq:vthetas} by 
\be
\label{eq:tv_bound}
\vartheta_{m,i,k}
&\leq& 
\max\limits_{(\bb,\bb^\prime) \in \{0, 1\}^{i} \times \{0,1\}^{i} \,:\, b_t = b_t^\prime, \, t < i} \;
d_{\tv}\left(\mbP_{m,i,k}^{\bb}, \, \mbP_{m,i,k}^{\bb^\prime}\right)
&\eqqcolon& \delta_{m,i,k}.  
\ee
From \eqref{eq:sum_bound}, 
this results in the bound 
\be
\label{eq:Xi_bound}
\Xi_{k,m}
&\leq& \dfrac{1}{M} \, \left(1 + \dsum_{i = m +1}^{M} 
\delta_{m,i,k} \right), 
&& k \in \{0, 1, \ldots, p\}, \;\; m \in \{1, \ldots, M\}. 
\ee
Using \eqref{eq:tv_bound} and \eqref{eq:Xi_bound},
we revisit \eqref{eq:a-h} to obtain,
for $t > 0$, 
the inequality 
\beno
\mbP\left( \left| \empk - \truthk \right| \,\geq\, t \right)
&\leq& 2 \, \exp\left( - \dfrac{2 M  t^2}{\mcD_{k,N}} \right),
&& k \in \{0, 1, \ldots, p\},
\ee 
defining,
for all $N \in \{3, 4, \ldots\}$ 
and all $k \in \{0, 1, \ldots, p\}$, 
\beno
\mcD_{N,k}
&\coloneqq& 
\dfrac{1}{M} \,  
\dsum_{m=1}^{M} \, \left( 1 + \dsum_{i = m+1}^{M} \, 
\delta_{m,i,k}
\right)^2,  
\ee
observing that in general $M$ will depend on $N$. 
By a union bound over the $p+1$ 
components of $\emp - \truth$,
we obtain 
\beno
\mbP\left( \norm{\emp - \truth}_{\infty} \,\geq\, t \right)
&\leq& 2 \, \exp\left( - \dfrac{2 M  t^2}{\dep} + \log(1+p) \right),
&& t > 0, 
\ee
defining $\dep \coloneqq \max\{\mcD_{N,0}, \mcD_{N,1}, \ldots, \mcD_{N,p}\}$.

\s 

{\it Second concentration inequality.} 
We start with a union bound over the $p+1$
components of $\emp - \truth$: 
\beno
\mbP\left(\norm{\emp - \truth}_{\infty} \,\geq\, t \right)
&\leq& \dsum_{k=0}^{p} \mbP\left( |\empk - \truthk| \,\geq\, t \right),
&& t > 0. 
\ee
Next,
for each $k \in \{0, 1, \ldots, p\}$,
we apply Chebyshev's inequality to obtain 
\be
\label{thm1_start}
\mbP\left( |\empk - \truthk| \,\geq\, t \right)
\= \mbP\left( \left| \dfrac{1}{M} \, \dsum_{i=1}^{M} \, B_{k,i} 
- \dfrac{1}{M} \, \dsum_{i=1}^{M} \, \mbE \, B_{k,i} \right| \,\geq\, t\right) \s\\
&\leq& \dfrac{1}{M^2 \, t^2} \, \left( \dsum_{i=1}^{M} \, \left[ 
\var \, B_{k,i} + \dsum_{j \in \{1, \ldots, M\} \setminus \{i\}} \, \cov(B_{k,i}, \, B_{k,j}) \right] \right) \s\\
&\leq& \dfrac{1}{M^2 \, t^2} \, \dsum_{i=1}^{M} \, \mbP(B_{k,i} = 1)
+ \dfrac{1}{M^2 \, t^2} \, \dsum_{i=1}^{M} \, \dsum_{j \in \{1, \ldots, M\} \setminus \{i\}} \, \cov(B_{k,i}, \, B_{k,j}) \s\\
\= \dfrac{\muk}{M \, t^2} + \dfrac{1}{M^2 \, t^2} \, \dsum_{i=1}^{M} \, \dsum_{j \in \{1, \ldots, M\} \setminus \{i\}} \, \cov(B_{k,i}, \, B_{k,j}), 
\ee 
with the definition 
$\muk \coloneqq (1/M)  \sum_{i=1}^{M} \mbP(B_{k,i} = 1)$ ($k \in \{0, 1, \ldots, p\}$). 
The second inequality presents two different upper bounds,
which we derive separately. 
For the first bound, 
we have
\beno
\mbP\left(\norm{\emp - \truth}_{\infty} \,\geq\, t \right)
&\leq& \dsum_{k=0}^{p} \, \left(
\dfrac{\muk}{M \, t^2} + \dfrac{1}{M^2 \, t^2} \, \dsum_{i=1}^{M} \, \dsum_{j \in \{1, \ldots, M\} \setminus \{i\}} \, \cov(B_{k,i}, \, B_{k,j}) \right)
&\leq& \dfrac{1 + \mcC_N}{M \, t^2}, 
\ee 
defining 
\beno
\mcC_N
&\coloneqq& \dfrac{1}{M} \;
\dsum_{i=1}^{M} \,
\dsum_{j \in \{1, \ldots, M\} \setminus \{i\}} \,
\dsum_{k=0}^{p} \,
\cov(B_{k,i}, \, B_{k,j}),
&& k \in \{0, 1, \ldots, p\},
\ee
and noting that $\sum_{k=0}^{p} \, \mu_k = 1$,
because 
\beno
\dsum_{k=0}^{p} \, \muk
\= \dsum_{k=0}^{p} 
\left( \dfrac{1}{M} \, \dsum_{i=1}^{M} \, \mbP(B_{k,i} = 1) \right)
\=  
\dfrac{1}{M} \, \dsum_{i=1}^{M} \, \dsum_{k=0}^{p} \, \mbP(B_{k,i} = 1)
\= \dfrac{1}{M} \, \dsum_{i=1}^{M} \, 1 
\= 1. 
\ee 
Hence,
the first upper bound is verified:  
\be
\label{eq:first_bound}
\mbP\left(\norm{\emp - \truth}_{\infty} \,\geq\, t \right)
&\leq& \dfrac{1 + \mcC_N}{M \, t^2},
&& t > 0. 
\ee
For the second upper bound, 
we start from \eqref{thm1_start},
by writing
\beno
\cov(B_{k,i}, \, B_{k,j})
\;\;=\;\;\mbE[B_{k,i} \, B_{k,j}] - \mbE[B_{k,i}] \, \mbE[B_{k,j}]
\;\;=\;\; \mbP(\{B_{k,i} = 1\} \cap \{B_{k,j} = 1\}) - \mbP(B_{k,i} = 1) \, \mbP(B_{k,j} = 1) \s\\ 
\;\;=\;\; \mbP(B_{k,i} = 1) \, \mbP(B_{k,j} = 1 \,|\, B_{k,i} = 1)  - \mbP(B_{k,i} = 1) \, \mbP(B_{k,j} = 1)
\;\;=\;\; \mbP(B_{k,i} = 1) \left( \mbP(B_{k,j} = 1 \,|\, B_{k,i} = 1) - \mbP(B_{k,j} = 1) \right). 
\ee
This yields,
from \eqref{thm1_start},
the summation 
\beno
\dfrac{1}{M^2 \, t^2} \, \dsum_{i=1}^{M} \, \dsum_{j \in \{1, \ldots, M\} \setminus \{i\}} \, \cov(B_{k,i}, \, B_{k,j})
\= \dfrac{1}{M^2 \, t^2} \,
\dsum_{i=1}^{M} \, \mbP(B_{k,i} = 1) \, 
\dsum_{j \in \{1, \ldots, M\} \setminus \{i\}}
\left( 
\mbP(B_{k,j} = 1 \,|\, B_{k,i} = 1) - \mbP(B_{k,j} = 1)
\right),
\ee
which implies 
\beno
\mbP\left(\norm{\emp - \truth}_{\infty} \,\geq\, t \right)
&\leq& \dsum_{k=0}^{p} \, \left(
\dfrac{\muk}{M \, t^2} + \dfrac{1}{M^2 \, t^2} 
\dsum_{i=1}^{M} \, \mbP(B_{k,i} = 1)
 \dsum_{j \in \{1, \ldots, M\} \setminus \{i\}} 
\left( 
\mbP(B_{k,j} = 1 \,|\, B_{k,i} = 1) - \mbP(B_{k,j} = 1)
\right)
\right) \s\\
&\leq& \dfrac{1}{M \, t^2} + 
\dfrac{1}{M^2 \, t^2}
\dsum_{i=1}^{M} \, 
\dsum_{k=0}^{p} \, 
\mbP(B_{k,i} = 1) \, 
\dsum_{j \in \{1, \ldots, M\} \setminus \{i\}} \, 
\left( 
\mbP(B_{k,j} = 1 \,|\, B_{k,i} = 1) - \mbP(B_{k,j} = 1)
\right),
\ee
recalling that  $\sum_{k=0}^{p} \mu_k = 1$.
\hide{ 
For $i \in \{1, \ldots, M\}$, 
the Cauchy-Schwarz inequality and the inequality $\norm{\bv}_2 \leq \norm{\bv}_1$
yields 
\beno 
&& 
\dsum_{k=0}^{p} \,
 \mbP(B_{k,i} = 1)
\dsum_{j \in \{1, \ldots, M\} \setminus \{i\}} \,
\mbE \left[ \, \left( \mbP\left(B_{k,j} = 1 \,|\, \sigma(B_{k,i})\right) - \mbP\left(B_{k,j} = 1\right) \right)
\right] \s\\ 
&\leq& \left( 
 \dsum_{k=0}^{p}  \mbP(B_{k,i} = 1) \right) 
\s
\left( \dsum_{k=0}^{p} \,
\dsum_{j \in \{1, \ldots, M\} \setminus \{i\}} \, 
\mbE \left[ \, \left( \mbP\left(B_{k,j} = 1 \,|\, \sigma(B_{k,i})\right) - \mbP\left(B_{k,j} = 1\right) \right)
\right] \right) \s\\
\= \mbE\left[ \, \dsum_{j \in \{1, \ldots, M\} \setminus \{i\}} \,
\dsum_{k=0}^{p} \, 
\left( \mbP\left(B_{k,j} = 1 \,|\, \sigma(B_{k,i})\right) - \mbP\left(B_{k,j} = 1\right) \right) \right] \s\\
&\leq& \max\limits_{\bb_i \in \{0,1\}^{p+1}} \,
\dsum_{j \in \{1, \ldots, M\} \setminus \{i\}} \,
d_{\tv}\left( \pi_{j}, \; \pi_{j\,|\,i}^{\bb_i}\right),  
\ee
recalling the definitions  
$\bB_i \coloneqq (B_{0,i}, B_{1,i}, \ldots, B_{p,i})$ ($i \in \{1, \ldots, M\}$),
\beno
\pi_{j\,|\,i}^{\bb_i}(\bb_j)
&\coloneqq& \mbP(\bB_j = \bb_j \,|\, \bB_i = \bb_i),
&& j \in \{1, \ldots, M\} \setminus \{i\}, \; i \in \{1, \ldots, M\}, \s \\
\pi_{j}(\bb_j)
&\coloneqq& \mbP(\bB_j = \bb_j),
&& j \in \{1, \ldots, M\}. 
\ee
}
As a result,
defining 
\beno
\Delta_{N} &\coloneqq& 
\dfrac{1}{M} \,
\dsum_{i=1}^{M} \,
\dsum_{k=0}^{p} \,
\mbP(B_{k,i} = 1) \,
\dsum_{j \in \{1, \ldots, M\} \setminus \{i\}} \,
\left(
\mbP(B_{k,j} = 1 \,|\, B_{k,i} = 1) - \mbP(B_{k,j} = 1)
\right),
\ee 
we prove the second upper bound 
\be
\label{eq:second_bound}
\mbP\left(\norm{\emp - \truth}_{\infty} \,\geq\, t \right)
&\leq& \dfrac{1 + \Delta_{N}}{M \, t^2}.  
\ee
Combining both \eqref{eq:first_bound} and \eqref{eq:second_bound},
we prove the second inequality 
\beno
\mbP\left( \norm{\emp - \truth}_{\infty} \,\geq\, t \right)
&\leq& \dfrac{1 + \min\left\{\mcC_N, \; \Delta_N\right\}}{M \, t^2}, 
&& t > 0. 
\ee
\end{proof}

\s

\begin{proof}[\textbf{\upshape Proof of Proposition \ref*{prop:1}:}]
By Definition \ref{def2},
\beno
\Delta_{N} &\coloneqq&
\dfrac{1}{M} \,
\dsum_{i=1}^{M} \,
\dsum_{k=0}^{p} \,
\mbP(B_{k,i} = 1) \,
\dsum_{j \in \{1, \ldots, M\} \setminus \{i\}} \,
\left(
\mbP(B_{k,j} = 1 \,|\, B_{k,i} = 1) - \mbP(B_{k,j} = 1)
\right).
\ee
Define,
for each $i \in \{1, \ldots, M\}$, 
the random vector  
$\bB_i \coloneqq (B_{0,i}, B_{1,i}, \ldots, B_{p,i}) \in \{\bb \in \{0,1\}^{p+1} : \norm{\bb}_1 = 1\}$,
and 
\beno
\pi_{j\,|\,i}^{\bb_i}(\bb_j)
&\coloneqq& \mbP(\bB_j = \bb_j \,|\, \bB_i = \bb_i),
&& j \in \{1, \ldots, M\} \setminus \{i\}, \; i \in \{1, \ldots, M\}, \s \\
\pi_{j}(\bb_j)
&\coloneqq& \mbP(\bB_j = \bb_j),
&& j \in \{1, \ldots, M\}.
\ee
Since the events $\mG_{0,i},\ldots, \mG_{p,i}$ are mutually exclusive, 
$\norm{\bB_i}_1 = 1$ holds with probability $1$.  
By the triangle inequality,  
\beno
\Delta_{N}
\;\;\leq\;\; \dfrac{1}{M} \,
\dsum_{i=1}^{M} \,
\dsum_{j \in \{1, \ldots, M\} \setminus \{i\}} \,
\dsum_{k=0}^{p} \,
\mbP(B_{k,i} = 1) \,
\left| \mbP(B_{k,j} = 1 \,|\, B_{k,i} = 1) - \mbP(B_{k,j} = 1) \right| \s\\ 
\;\;\leq\;\; \dfrac{1}{M} \,
\dsum_{i=1}^{M} \,
\dsum_{j \in \{1, \ldots, M\} \setminus \{i\}} \,
\dsum_{\bb_i \in \{\bb \in \{0,1\}^{p+1} \,:\, \norm{\bb}_1 = 1\}} \,
\mbP(\bB_{i} = \bb_i) \; d_{\tv}\left(\pi_{j\,|\,i}^{\bb_i}, \, \pi_{j} \right)
\;\;=\;\; \dfrac{1}{M} \,
\dsum_{i=1}^{M} \,
\dsum_{j \in \{1, \ldots, M\} \setminus \{i\}} \, \mbE \, d_{\tv}\left(\pi_{j\,|\,i}^{\bB_i}, \, \pi_{j} \right).  
\ee 
\end{proof}

\s

\begin{proof}[\textbf{\upshape Proof of Theorem \ref*{thm:main1}:}] 
The assumptions of Theorem \ref{thm:main1} ensure the assumptions of Lemma \ref{lem:concentration} are satisfied.
To obtain the first result, 
we apply Lemma \ref{lem:concentration},
for all $\epsilon > 0$,
to obtain the inequality 
\beno
\mbP\left( \norm{\emp - \truth}_{\infty} \,\geq\, \epsilon \right)
&\leq& 2 \, \exp\left( - \dfrac{2 \, M \, \epsilon^2}{\dep} + \log(1+p) \right),
\ee
which in turn implies through the complement rule that 
\beno
\mbP\left( \norm{\emp - \truth}_{\infty} \,<\, \epsilon \right)
&\geq& 1 - 2 \, \exp\left( - \dfrac{2 \, M \, \epsilon^2}{\dep} + \log(1+p) \right) 
&\geq& 1 - 2 \, \exp\left( - \dfrac{2 \, M \, \epsilon^2}{\dep} + \log(\max\{M, \, 1+p\}) \right). 
\ee
Choosing 
\beno
\epsilon
\= \sqrt{\dfrac{3}{2}} \,
\sqrt{\dfrac{\dep \, \log(\max\{M, \, 1+p\})}{M}}
\ee
establishes that 
\beno
\mbP\left( \norm{\emp - \truth}_{\infty} \,<\, \sqrt{\dfrac{3}{2}} \,
\sqrt{\dfrac{\dep \, \log(\max\{M, \, 1+p\})}{M}} \right)
&\geq& 1  - \dfrac{2}{\max\{M, \, 1+p\}^2}. 
\ee
To obtain the second result, 
we apply Lemma \ref{lem:concentration},
for all $\epsilon > 0$,
to obtain the inequality
\beno
\mbP\left( \norm{\emp - \truth}_{\infty} \,\geq\, \epsilon \right)
&\leq& \dfrac{1 +\min\left\{\mcC_N, \; \Delta_N\right\}}{M \, \epsilon^2}.
\ee
Let $\alpha \in (0, 1)$. 
Choosing 
\beno
\epsilon 
\= \sqrt{\dfrac{1 + \min\left\{|\mcC_N|, \; |\Delta_N|\right\}}{\alpha \, M}}
&\in& (0, \infty) 
\ee
establishes through the complement rule that 
\beno
\mbP\left(  \norm{\emp - \truth}_{\infty} \,<\, \sqrt{\dfrac{1 + \min\left\{|\mcC_N|, \; |\Delta_N|\right\}}{\alpha \, M}} \right)
&\geq& 1 - \alpha,
\ee
for all significance levels $\alpha \in (0, 1)$. 
\end{proof}

\s

\begin{proof}[\textbf{\upshape Proof of Theorem \ref*{thm:main2}:}]
By the complement rule, 
\be
\label{eq:last_step}
\mbP\left( \norm{\emp - \truth}_{\infty} \,<\, \epsilon \right)
\= 1 - \mbP\left( \norm{\emp - \truth}_{\infty} \,\geq\, \epsilon \right).
\ee
We lower bound the probability 
$\mbP\left( \norm{\emp - \truth}_{\infty} \,<\, \epsilon \right)$ 
by upper bounding the probability 
$\mbP\left( \norm{\emp - \truth}_{\infty} \,\geq\, \epsilon \right)$. 
Applying the law of total probability, 
we obtain, 
for all $\epsilon > 0$,  
\be
\label{eq:pbound1}
\mbP\left( \norm{\emp - \truth}_{\infty} \,\geq\, \epsilon \right) 
\;\;=\;\; \mbP\left( \left\{ \norm{\emp - \truth}_{\infty} \,\geq\, \epsilon \right\}
\,\cap\, \left\{\bX \in \mbX_0\right\} 
\right) 
+  \mbP\left( \left\{ \norm{\emp - \truth}_{\infty} \,\geq\, \epsilon \right\}
\,\cap\, \left\{\bX \in \mbX_0^c\right\}
\right) \s\\
\;\;\leq\;\; \mbP\left(\norm{\emp - \truth}_{\infty} \,\geq\, \epsilon \,|\, \bX \in \mbX_0\right)
+ \mbP(\bX \in \mbX_0^c) 
\;\;\leq\;\; \mbP\left(\norm{\emp - \truth}_{\infty} \,\geq\, \epsilon \,|\, \bX \in \mbX_0\right) + r(N),
\ee
using the bound in \eqref{eq:A2-1} which implies $\mbP(\bX \in \mbX_0^c) \leq r(N)$.
Note that the assumption of \eqref{eq:A2-1} ensures that the conditional probability given in 
\eqref{eq:pbound1} is well-defined by assuming that $r(N) \in (0, 1)$ so that 
$\mbP(\bX \in \mbX_0) \geq 1 - r(N) > 0$. 
We upper bound the conditional probability 
$\mbP\left(\norm{\emp - \truth}_{\infty} \,\geq\, \epsilon \,|\, \bX \in \mbX_0\right)$ 
by manipulating  the event of interest:  
\beno
\norm{\emp - \truth}_{\infty} 
\= \norm{ \, \emp - \mbE[\emp \,|\, \bX \in \mbX_0] \, \mbP(\bX \in \mbX_0) 
- \mbE[\emp \,|\, \bX \in \mbX_0^c] \, \mbP(\bX \in \mbX_0^c) \, }_{\infty} \s\\
&\leq& \norm{\, \emp - \mbE[\emp \,|\, \bX \in \mbX_0] \, \mbP(\bX \in \mbX_0) \, }_{\infty}
+ \norm{\, \mbE[\emp \,|\, \bX \in \mbX_0^c] \, \mbP(\bX \in \mbX_0^c) \, }_{\infty} \s\\
&\leq& \norm{ \, \emp - \mbE[\emp \,|\, \bX \in \mbX_0] \, \mbP(\bX \in \mbX_0) \, }_{\infty} 
+ r(N), 
\ee
where we apply the law of total expectation in the first line,
obtain the inequality in the second line from the triangle inequality,  
and obtain the last inequality 
from the fact that,
for all $k \in \{0, 1, \ldots, p\}$, 
\beno 
\left|\mbE[\empk \,|\, \bX \in \mbX_0^c] \, \mbP(\bX \in \mbX_0^c) \right|  
&\leq&  \mbP(\bX \in \mbX_0^c)
&\leq& r(N),
\ee
since $\mbE[\empk \,|\, \bX \in \mbX_0^c] \in [0, 1]$ and using \eqref{eq:A2-1}.  
Next, 
we apply the triangle inequality to obtain 
\beno
\norm{ \, \emp - \mbE[\emp \,|\, \bX \in \mbX_0] \, \mbP(\bX \in \mbX_0) \, }_{\infty} \s\\
\quad\quad\leq\;\; \norm{ \, \emp - \mbE[\emp \,|\, \bX \in \mbX_0] \, }_{\infty}
+ \norm{ \, \mbE[\emp \,|\, \bX \in \mbX_0] - \mbE[\emp \,|\, \bX \in \mbX_0] \, \mbP(\bX \in \mbX_0) \, }_{\infty} \s\\
\quad\quad=\;\; \norm{ \, \emp - \mbE[\emp \,|\, \bX \in \mbX_0] \, }_{\infty} 
+ \norm{ \, \mbE[\emp \,|\, \bX \in \mbX_0] \, }_{\infty} \, (1 - \mbP(\bX \in \mbX_0)) \s\\
\quad\quad\leq\;\; \norm{ \, \emp - \mbE[\emp \,|\, \bX \in \mbX_0] \, }_{\infty} + \mbP(\bX \in \mbX_0^c)
\;\;\leq\;\; \norm{ \, \emp - \mbE[\emp \,|\, \bX \in \mbX_0] \, }_{\infty} + r(N),  
\ee
again using the fact that $\mbE[\empk \,|\, \bX \in \mbX_0] \in [0, 1]$ and using \eqref{eq:A2-1}.
Altogether, 
we have shown the inequality 
\beno
\mbP\left(\norm{\emp - \truth}_{\infty} \,\geq\, \epsilon \,|\, \bX \in \mbX_0\right)
&\leq& \mbP\left(\norm{ \, \emp - \mbE[\emp \,|\, \bX \in \mbX_0] \, }_{\infty} + 2 \, r(N) 
\,\geq\, \epsilon \,|\, \bX \in \mbX_0\right). 
\ee 
Next,
applying Lemma \ref{lem:AB_bound}, 
we have 
\beno
\mbP\left(\norm{ \, \emp - \mbE[\emp \,|\, \bX \in \mbX_0] \, }_{\infty} + 2 \,r(N) \,\geq\, \epsilon \,|\, \bX \in \mbX_0\right)
\s\\
\quad\quad\leq\;\; \mbP\left(\norm{ \, \emp - \mbE[\emp \,|\, \bX \in \mbX_0] \, }_{\infty} \,\geq\, \dfrac{\epsilon}{3} 
\,\Big|\, \bX \in \mbX_0\right)
+ 2 \, \mbP\left(r(N) \,\geq\, \dfrac{\epsilon}{3} \,\Big|\, \bX \in \mbX_0\right) \s\\
\quad\quad=\;\; \mbP\left(\norm{ \, \emp - \mbE[\emp \,|\, \bX \in \mbX_0] \, }_{\infty} \,\geq\, \dfrac{\epsilon}{3}
\,\Big|\, \bX \in \mbX_0\right) + 
2 \, \one\left( r(N) \,\geq\, \dfrac{\epsilon}{3} \right).  
\ee 
For the first term, 
we apply Lemma \ref{lem:concentration} to obtain 
\beno
\mbP\left(\norm{ \, \emp - \mbE[\emp \,|\, \bX \in \mbX_0] \, }_{\infty} \,\geq\, \dfrac{\epsilon}{3}
\;\Big|\; \bX \in \mbX_0\right)
&\leq& 2 \, \exp\left( - \dfrac{2 \, M \, \epsilon^2}{9 \, \dep(\mbX_0)} + \log(1+p) \right),
\ee
as the definition of $\dep(\mbX_0)$ using \eqref{eq:delta_r} holds with probability $1$ 
when conditioning on the event $\bX \in \mbX_0$. 
Choosing 
\beno
\epsilon
\= \sqrt{\dfrac{27}{2}} 
\sqrt{\dfrac{\dep(\mbX_0) \; \log(\max\{M, 1+p\})}{M}}
\ee
establishes that 
\beno
\mbP\left(\norm{ \, \emp - \mbE[\emp \,|\, \bX \in \mbX_0] \, }_{\infty} \,\geq\, 
\sqrt{\dfrac{27}{2}} \, 
\sqrt{\dfrac{\dep(\mbX_0) \; \log(\max\{M, 1+p\})}{M}}
\;\Big|\; \bX \in \mbX_0\right)
&\leq& \dfrac{4}{\max\{M, 1+p\}^2}. 
\ee
Under the assumption that 
\beno
r(N)
&\leq& \sqrt{\dfrac{\dep(\mbX_0) \; \log(\max\{M, 1+p\})}{M}}
&<& \sqrt{\dfrac{3}{2}} \, \sqrt{\dfrac{\dep(\mbX_0) \, \log(\max\{M, 1+p\})}{M}}
\= \dfrac{\epsilon}{3},  
\ee
we have $\one(r(N) \,\geq\, \epsilon \,/\, 3) = 0$,
which allows us to  
revisit \eqref{eq:pbound1} to obtain the bound  
\be
\label{eq:last}
\mbP\left( \norm{\emp - \truth}_{\infty} \,\geq\, \sqrt{\dfrac{27}{2}} \, 
\sqrt{\dfrac{\dep(\mbX_0) \; \log(\max\{M, 1+p\})}{M}} \right)
&\leq& r(N) + \dfrac{4}{\max\{M, 1+p\}^2}. 
\ee
We lastly revisit \eqref{eq:last_step}
with \eqref{eq:last} 
to obtain the final bound 
\beno
\mbP\left( \norm{\emp - \truth}_{\infty} \,<\, 
\sqrt{\dfrac{27}{2}} \, 
\sqrt{\dfrac{\dep(\mbX_0) \, \log(\max\{M, 1+p\})}{M}}
\right)
&\geq& 1 - r(N) - \dfrac{4}{\max\{M, 1+p\}^2}. 
\ee

\end{proof}

\s\s

\begin{proof}[\textbf{\upshape Proof of Corollary \ref*{cor:1}:}] 
The assumptions of Theorem \ref{thm:main2} 
are met under the assumptions of Corollary \ref{cor:1},
and we may apply  Theorem \ref{thm:main2} to obtain the existence of a constant $N_0 \geq 1$
such that,
for all $N \geq N_0$,    
\beno
\mbP\left( \norm{\emp - \truth}_{\infty} \,<\, 
\sqrt{\dfrac{3}{2}} \,
\sqrt{\dfrac{\dep(\mbX_0) \, \log(\max\{M, \, 1+p\})}{M}} \right)
&\geq& 1  - r(N) - \dfrac{4}{\max\{M, \, 1+p\}^2}.
\ee
Noting that $M = N$, $p = N-1$, and $r(N) = 2 \,/\, N^2$, 
in this example, 
we obtain 
\beno
\mbP\left( \norm{\emp - \truth}_{\infty} \,<\, \sqrt{\dfrac{3}{2}} \, 
\sqrt{\dfrac{\dep(\mbX_0) \, \log(N)}{N}} \right)
&\geq& 1 - \dfrac{6}{N^2},
&&  N \geq N_0.  
\ee 
Under the assumption of both \eqref{eq:cor1_assumption_a} and \eqref{eq:cor1_assumption_b}, 
we use the bound on $\dep(\mbX_0)$ presented in \eqref{eq:cor1_dep_bound} to obtain 
\be
\label{eq:cor1_prob_bound}
\mbP\left( \norm{\emp - \truth}_{\infty} \,<\, 
(1 + M_{\max} + \alpha_{\max}) \, 
\sqrt{\dfrac{3}{2}} \,
\sqrt{\dfrac{\log(N)}{N}} \right)
&\geq& 1 - \dfrac{6}{N^2},
&& N \geq N_0. 
\ee
We establish the asymptotic convergence result utilizing the Borel-Cantelli lemma
(e.g., Theorem 4.1.3 of \citep{discrete_prob}). 
Define 
\beno
\epsilon_N
&\coloneqq& 
(1 + M_{\max} + \alpha_{\max}) \,
\sqrt{\dfrac{3}{2}} \,
\sqrt{\dfrac{\log(N)}{N}}, 
&& N \in \{N_0, N_0 + 1, \ldots\},
\ee
and note that $\epsilon_N \to 0$ as $N \to \infty$,
by the assumption that 
$M_{\max} + \alpha_{\max} = o\left(\hspace{-.1cm}\sqrt{N / \log(N)} \hspace{.05cm}\right)$. 
Leveraging \eqref{eq:cor1_prob_bound},
\beno
\dsum_{N=1}^{\infty} \, \mbP\left( \norm{\emp - \truth}_{\infty} \,\geq\, \epsilon_N \right)
&\leq& N_0 + \dsum_{N=N_0}^{\infty} \, \dfrac{6}{N^2}
&\leq& N_0 + \dsum_{N=1}^{\infty} \, \dfrac{6}{N^2}
&=& N_0 + \pi^2 
&<& \infty,
\ee
establishing 
through
Theorem 4.1.3 of \citep{discrete_prob}
that $\norm{\emp - \truth}_{\infty}$ converges almost surely to $0$ 
as $N \to \infty$. 
\end{proof}

\s

\begin{proof}[\textbf{\upshape Proof of Corollary \ref*{cor:bern}:}] 
By Definition \ref{def2},
\beno
\Delta_{N} \= 
\dfrac{1}{N}
\dsum_{i=1}^{N} \,
\dsum_{d=0}^{N-1} \,
\mbP\left(d_i = d\right) \,
\dsum_{j \in \{1, \ldots, N\} \setminus \{i\}} \,
\left(
\mbP\left(d_j = d \,|\, d_i = d\right) - \mbP\left(d_j = d\right)
\right),
\ee
where $d_i$ denotes the degree of node $i \in \mN$. 
By the law of total probability, 
\beno
\mbP\left(d_j = d \,|\, d_i = d\right)
\= \mbP\left(\{d_j = d\} \cap \{X_{i,j} = 1\} \,|\, d_i = d\right)
+ \mbP\left(\{d_j = d\} \cap \{X_{i,j} = 0\} \,|\, d_i = d\right),
\ee 
where each term can be expressed as
\beno
\mbP\left(\{d_j = d\} \cap \{X_{i,j} = 1\} \,|\, d_i = d\right)
\= \mbP\left( X_{i,j} = 1 \,|\, d_i = d \right) \, 
\mbP\left( d_j = d \,|\, X_{i,j} = 1 \right) \s\\ 
\mbP\left(\{d_j = d\} \cap \{X_{i,j} = 0\} \,|\, d_i = d\right)
\= \mbP\left( X_{i,j} = 0 \,|\, d_i = d \right) \,
\mbP\left( d_j = d \,|\, X_{i,j} = 0 \right), 
\ee
noting that event $\{d_j = d\}$ is conditionally independent of event $\{d_i = d\}$ 
when conditioning on event $\{X_{i,j} = 1\}$ or event $\{X_{i,j} = 0\}$ 
due to the independence of edges under a Bernoulli random graph.  
Next,
by the law of total probability,  
\beno
\mbP\left(d_j = d\right)
\= \mbP\left(d_j = d \,|\, X_{i,j} = 1 \right) \, \mbP\left(X_{i,j} = 1\right)
+ \mbP\left(d_j = d \,|\, X_{i,j} = 0\right) \, \mbP\left(X_{i,j} = 0 \right).  
\ee
Then,  
\beno
\mbP\left(d_j = d \,|\, d_i = d\right) - \mbP\left(d_j = d\right)
\= 
\mbP\left( d_j = d \,|\, X_{i,j} = 1 \right) \left(
\mbP\left( X_{i,j} = 1 \,|\, d_i = d \right) \,
- \mbP\left(X_{i,j} = 1\right) \right) \s\\
&& + \;\; \mbP\left( d_j = d \,|\, X_{i,j} = 0 \right) \left(
\mbP\left( X_{i,j} = 0 \,|\, d_i = d \right)  - 
\mbP\left(X_{i,j} = 0 \right)
\right). 
\ee
Define probability measures $\mbB_{\min}$ and $\mbB_{\max}$ for $\bX$ by 
\beno
\mbB_{\min}(X_{i,j} = 1) 
\;\coloneqq\; \min\limits_{\{a,b\} \subset \mN} \, \mbP(X_{a,b} = 1) \;>\; 0, 
&& \mbB_{\max}(X_{i,j} = 1)
\;\coloneqq\; \max\limits_{\{a,b\} \subset \mN} \, \mbP(X_{a,b} = 1) \;>\; 0,
&& \{i,j\} \subset \mN.
\ee
Noting that edge variables are exchangeable under both $\mbB_{\min}$ and $\mbB_{\max}$,
we then have the bounds  
\beno
\mbP\left( X_{i,j} = 1 \,|\, d_i = d \right)
&\leq& \mbB_{\max}\left(X_{i,j} = 1 \,|\, d_i = d \right)
\= \dfrac{d}{N-1} \s\\
\mbP\left( X_{i,j} = 0 \,|\, d_i = d \right)
&\leq& \mbB_{\min}\left(X_{i,j} = 0 \,|\, d_i = d\right)
\= 1 - \dfrac{d}{N-1}, 
\ee
which leads to the following upper bound on $\mbP\left(d_j = d \,|\, d_i = d\right) - \mbP\left(d_j = d\right)$: 
\beno
\mbP(d_j = d \,|\, X_{i,j} = 1) \, \left( \dfrac{d}{N-1} - \mbP(X_{i,j} = 1) \right) 
+ \mbP(d_j = d \,|\, X_{i,j} = 0) \, \left(\mbP(X_{i,j} = 1) - \dfrac{d}{N-1} \right) 
&\leq& \dfrac{d}{N-1} + \mbP(X_{i,j} = 1),
\ee
because $1 - d \, (N-1)^{-1} - \mbP(X_{i,j} = 0) = \mbP(X_{i,j} = 1) - d \,(N-1)^{-1}$.
Altogether, 
\beno
\Delta_N 
&\leq& \dfrac{1}{N} \dsum_{i=1}^{N} \, \dsum_{d = 0}^{N-1} \mbP(d_i = d) 
\dsum_{j \in \mN \setminus \{i\}} \left[ \dfrac{d}{N-1} + \mbP(X_{i,j} = 1) \right] 
\= \dfrac{1}{N} \dsum_{i=1}^{N} \left[ \mbE \, d_i + \dsum_{d = 0}^{N-1} \mbP(d_i = d) \, \mbE \, d_i \right] 
\;\;=\;\; \dfrac{2}{N} \dsum_{i=1}^{N} \mbE \, d_i. 
\ee
\end{proof}

\s

\begin{proof}[\textbf{\upshape Proof of Corollary \ref*{cor:2}:}] 
The assumptions of Theorem \ref{thm:main2}
are met under the assumptions of Corollary \ref{cor:2},
and we may apply  Theorem \ref{thm:main2} to obtain the existence of a constant $N_0 \geq 3$
such that,
for all $N \geq N_0$,
\beno
\mbP\left( \norm{\emp - \truth}_{\infty} \,<\,
\sqrt{\dfrac{3}{2}} \,
\sqrt{\dfrac{\dep(\mbX_0) \, \log(\max\{M, \, 1+p\})}{M}} \right)
&\geq& 1  - r(N) - \dfrac{4}{\max\{M, \, 1+p\}^2}.
\ee
By  assumption \eqref{deg_control},
$\mbP(\norm{\bX}_1 \,\geq\, N^{\beta}) \geq 1 - 2 \,/\, N^2$.
Since $M \geq N^{\beta}$ and $p = N-2$, 
we have,
for all $N \geq N_0 \geq 3$, 
that 
\beno
\mbP\left( \norm{\emp - \truth}_{\infty} \,<\,
\sqrt{\dfrac{3}{2}} \,
\sqrt{\dfrac{\dep(\mbX_0) \, \log(N)}{N^{\beta}}} \right)
&\geq& 1 - \dfrac{2}{N^2}- \dfrac{4}{(N-1)^2}
&\geq& 1 - \dfrac{11}{N^2},
\ee 
using the inequalities $\log(N-1) \leq \log(N)$ ($N \geq 1$) and  
$4 \,/\, (N-1)^2 \leq 9 \,/\, N^2$ (valid for $N \geq 3$). 
Under the assumption of both \eqref{eq:cor2_assumption_a} and \eqref{eq:cor2_assumption_b}, 
we use the bound on $\dep(\mbX_0)$ presented in \eqref{eq:cor2_dep_bound} to obtain 
\be
\label{eq:cor2_prob_bound}
\mbP\left( \norm{\emp - \truth}_{\infty} \,<\, 
(1 + M_{\max} + \alpha_{\max}) \, 
\sqrt{\dfrac{3}{2}} \,
\sqrt{\dfrac{\log(N)}{N^{\beta}}} \right)
&\geq& 1 - \dfrac{11}{N^2},
&& N \geq N_0. 
\ee
We establish the asymptotic convergence result utilizing the Borel-Cantelli lemma
(e.g., Theorem 4.1.3 of \citep{discrete_prob}). 
Define 
\beno
\epsilon_N
&\coloneqq& 
(1 + M_{\max} + \alpha_{\max}) \,
\sqrt{\dfrac{3}{2}} \,
\sqrt{\dfrac{\log(N)}{N^{\beta}}}, 
&& N \in \{N_0, N_0 + 1, \ldots\},
\ee
and note that $\epsilon_N \to 0$ as $N \to \infty$,
by the assumption that 
$M_{\max} + \alpha_{\max} = o\left(\hspace{-.1cm}\sqrt{N^{\beta} / \log(N)} \hspace{.05cm}\right)$. 
Leveraging \eqref{eq:cor2_prob_bound},
\beno
\dsum_{N=1}^{\infty} \, \mbP\left( \norm{\emp - \truth}_{\infty} \,\geq\, \epsilon_N \right)
&\leq& N_0 + \dsum_{N=N_0}^{\infty} \, \dfrac{11}{N^2}
&\leq& N_0 + \dsum_{N=1}^{\infty} \, \dfrac{11}{N^2}
&=& N_0 + \dfrac{11 \, \pi^2}{6} 
&<& \infty,
\ee
establishing 
through
Theorem 4.1.3 of \citep{discrete_prob}
that $\norm{\emp - \truth}_{\infty}$ converges almost surely to $0$ 
as $N \to \infty$. 
\end{proof}

\s\s

\begin{lemma}
\label{lem:AB_bound}
Let $A$, $B$, and $C$ be random variables. 
Then,
for all $t > 0$,  
\beno
\mbP(A+B+C \geq t) 
&\leq& \mbP\left(A \geq \dfrac{t}{3} \right) 
+ \mbP\left(B \geq \dfrac{t}{3} \right) 
+ \mbP\left(C \geq \dfrac{t}{3} \right).  
\ee
\end{lemma}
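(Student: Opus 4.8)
The plan is to reduce this to an elementary set-inclusion argument followed by an application of the subadditivity of the probability measure $\mbP$. The key observation is that the event $\{A+B+C \geq t\}$ is contained in the union of the three events $\{A \geq t/3\}$, $\{B \geq t/3\}$, and $\{C \geq t/3\}$. I would establish this inclusion by contraposition: if all three of $A < t/3$, $B < t/3$, and $C < t/3$ hold simultaneously, then summing these strict inequalities gives $A + B + C < t$. Equivalently, on the complement of the union $\{A < t/3\} \cap \{B < t/3\} \cap \{C < t/3\}$, the sum $A+B+C$ is strictly less than $t$, so the event $\{A+B+C \geq t\}$ cannot occur. This yields the pointwise inclusion
\beno
\{A+B+C \geq t\}
&\subseteq& \left\{A \geq \dfrac{t}{3}\right\} \cup \left\{B \geq \dfrac{t}{3}\right\} \cup \left\{C \geq \dfrac{t}{3}\right\}.
\ee

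Having established the inclusion, I would apply monotonicity of $\mbP$ to obtain $\mbP(A+B+C \geq t) \leq \mbP\left(\left\{A \geq t/3\right\} \cup \left\{B \geq t/3\right\} \cup \left\{C \geq t/3\right\}\right)$, and then invoke the finite subadditivity (union bound) of the probability measure to split the probability of the union into the sum of the three marginal probabilities, delivering exactly the claimed bound for every $t > 0$.

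There is no substantive obstacle here; the result is a routine consequence of the pigeonhole-type inequality that three quantities each below $t/3$ cannot sum to at least $t$, combined with the standard union bound. The only point requiring mild care is the direction of the strict versus non-strict inequalities in the contrapositive step, which is handled cleanly by noting that strict inequalities $A < t/3$, $B < t/3$, $C < t/3$ sum to the strict inequality $A + B + C < t$. The statement holds for arbitrary (jointly distributed) random variables $A$, $B$, $C$, as no independence or integrability assumptions are needed.
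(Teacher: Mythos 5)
Your proposal is correct and is essentially the same argument as the paper's: the paper states the implication $\{A < t/3\} \cap \{B < t/3\} \cap \{C < t/3\} \implies \{A+B+C < t\}$ and then applies De Morgan's law with a union bound, which is exactly the contrapositive form of your event inclusion followed by subadditivity. No gaps.
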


\begin{proof}[\textbf{\upshape Proof of Lemma \ref*{lem:AB_bound}:}]
Note that 
\beno
\left\{A \,<\, \dfrac{t}{3} \right\} \,\cap\, \left\{ B \,<\, \dfrac{t}{3} \right\}
\,\cap\, \left\{ C \,<\, \dfrac{t}{3} \right\}
&&\implies&& 
\left\{A + B +C < t \right\}.  
\ee
As a result, 
De'Morgan's law and a union bound shows that 
\beno
\mbP\left( A + B +C < t \right)
&\geq& \mbP\left( \left\{A \,<\, \dfrac{t}{3} \right\} \,\cap\, \left\{ B \,<\, \dfrac{t}{3} \right\} 
\,\cap\, \left\{ C \,<\, \dfrac{t}{3} \right\}
\right)
&\geq& 1 - \mbP\left( A \,\geq\, \dfrac{t}{3} \right) - \mbP\left( B \,\geq\, \dfrac{t}{3} \right)
- \mbP\left( C \,\geq\, \dfrac{t}{3} \right). 
\ee
Re-arranging terms in the expression proves the desired inequality  
shows that 
\beno
\mbP(A+B+C \geq t)
&\leq&
\mbP\left( A \,\geq\, \dfrac{t}{3} \right) + \mbP\left( B \,\geq\, \dfrac{t}{3} \right)
+ \mbP\left( C \,\geq\, \dfrac{t}{3} \right). 
\ee
\end{proof}

\bibliographystyle{myjmva}
\bibliography{library}

\end{document}